\newcommand{\address}[1]{\gdef\@address{#1}}
\newcommand{\email}[1]{\gdef\@email{\url{#1}}}
\newcommand{\@endstuff}{\par\vspace{\baselineskip}\noindent\small
\begin{tabular}{@{}l}\@address\\\textit{E-mail address:} \@email\end{tabular}}
\DeclareFontFamily{OMX}{MnSymbolE}{}
\DeclareSymbolFont{MnLargeSymbols}{OMX}{MnSymbolE}{m}{n}
\DeclareFontShape{OMX}{MnSymbolE}{m}{n}{
    <-6>  MnSymbolE5
   <6-7>  MnSymbolE6
   <7-8>  MnSymbolE7
   <8-9>  MnSymbolE8
   <9-10> MnSymbolE9
  <10-12> MnSymbolE10
  <12->   MnSymbolE12
}{}
\DeclareFontShape{OMX}{MnSymbolE}{b}{n}{
    <-6>  MnSymbolE-Bold5
   <6-7>  MnSymbolE-Bold6
   <7-8>  MnSymbolE-Bold7
   <8-9>  MnSymbolE-Bold8
   <9-10> MnSymbolE-Bold9
  <10-12> MnSymbolE-Bold10
  <12->   MnSymbolE-Bold12
}{}
\let\llangle\@undefined
\let\rrangle\@undefined
\DeclareMathDelimiter{\llangle}{\mathopen}%
                     {MnLargeSymbols}{'164}{MnLargeSymbols}{'164}
\DeclareMathDelimiter{\rrangle}{\mathclose}%
                     {MnLargeSymbols}{'171}{MnLargeSymbols}{'171}
\newtheorem{theorem}{Theorem}[section]
\newtheorem{definition}[theorem]{Definition}
\newtheorem{example}[theorem]{Example}
\newtheorem{lemma}[theorem]{Lemma}
\newtheorem{remark}[theorem]{Remark}
\newtheorem{proposition}[theorem]{Proposition}
\newtheorem{corollary}[theorem]{Corollary}
\newtheorem{question}[theorem]{Question}
\newtheorem{notation}[theorem]{Notation}
\newcommand{\citep}[1]{\cite{#1}}
\newcommand{\crit}{{\rm crit}}
\newcommand{\dvol}{\text{\normalfont dvol}}
\newcommand{\image}{\text{\normalfont Im}}
\newcommand{\supp}{\text{\normalfont supp}}
\newcommand{\graded}{\mathcal{G}}
\newcommand{\sgn}{\text{\normalfont sgn}}
\newcommand{\adjoint}{\text{\normalfont Ad}}
\newcommand{\calo}{\mathcal{O}}
\newcommand{\calm}{\mathcal{M}}
\newcommand\abs[1]{\left\lvert#1\right\rvert}
\newcommand{\lrcornerwiththings}{\lrcorner\hspace{+0.5mm}}
\newcommand{\professor}{\text{Prof.\ }\hspace{-0.03125mm}}
\newcommand{\doctor}{\text{Dr.\ }\hspace{-0.03125mm}}
\title{\textbf{Analytic and topological realizations of the invariant Thom-Smale complex}}
\author{Hao Zhuang}
\address{Department of Mathematics, Washington University in St. Louis}
\email{hzhuang@wustl.edu}
\date{\today}
\begin{document}
\maketitle
\noindent
\begin{abstract}
With the smooth action of a connected compact Lie group $G$, we realize the $G$-invariant Thom-Smale complex in an analytic way using the $G$-invariant Witten instanton complex. Both complexes are associated to a specific Morse-Bott function on a closed oriented $G$-manifold. This result includes the influence from the horizontal direction around the critical set, generalizing the strict Morse case.  
\end{abstract}
\tableofcontents
\section{Introduction}
In 1982, Witten proposed an analytic way to Morse theory in his influential paper \cite{witten}. In his settings, the de Rham $d$ is replaced by $d_{T} \coloneqq e^{-Tf}de^{Tf}$, where $f$ is a generic Morse function, and $T$ is a parameter. When $T\to+\infty$, the kernel of the Witten Laplacian $\left(d_T+d_T^*\right)^2$ localizes around the critical points of $f$, deducing the Morse inequalities as corollaries. 

As Witten conjectured, it is also possible to realize the whole Thom-Smale complex associated to $f$ in an analytic way. 
In 1985, Helffer and Sj\"ostrand first confirmed Witten's conjecture via semi-classical analysis tools in \cite{Helffer01011985}. Later, in 1994, Bismut and Zhang provided an asymptotic analysis approach in \cite{Bismut1994}, simplifying Helffer and Sj\"ostrand's work. 


It is immediate to ask for an analytic approach when $f$ is a Morse-Bott function. In 1986, Bismut provided a heat kernel and probabilistic proof of the Morse-Bott inequalities in \cite{bismutcomplicated}. In 1988, using the semi-classical analysis tools again together with a perturbation on the function, Helffer and Sj\"ostrand also gave an analytic proof of the Morse-Bott inequalities in \cite{helffersbottineq}. In 1991, Bismut and Lebeau developed an asymptotic analysis approach in \cite{bismutandlebeau} to study the kernel of the general Witten Laplacian deformed by a vector field. In 2014, following \cite{bismutandlebeau}, Lu proved the Morse-Bott inequalities under the action of a compact Lie group in \cite{wenlumorsebottineq}, along with the Morse-Bott inequalities for compact manifolds with boundaries as corollaries. 

The above analytic results give thorough studies of the Morse-Bott inequalities. Following these results, as in the Morse case, we hope to involve the associated chain complex: 
\begin{question}\label{question main thom smale}
\normalfont
    Given a Morse-Bott function $f$ with transversality conditions on a closed oriented manifold, what is the analytic realization of its associated Thom-Smale complex?
\end{question}

In this paper, we give an answer to the case in which $f$ is $G$-invariant and satisfies assumptions \ref{morse bott orbit assumption 1} and \ref{morse bott orbit assumption 2}. Thus, we introduce not only a new topological invariant to $G$-manifolds, but also reveal more opportunities in the intersection between analytic tools and topics in topology. 

More precisely, we let $G$ be a connected compact Lie group acting on an oriented closed $m$-dimensional manifold $M$. Then, we assume that $M$ carries a $G$-invariant metric $\langle\cdot,\cdot\rangle$ and a smooth $G$-invariant function $f: M\to\mathbb{R}$ satisfying:
\begin{enumerate}[label = (a\arabic*)]
    \item The critical submanifold $\crit(f)$ of $f$ is a disjoint union of $G$-orbits, and along the normal direction of each critical orbit, the Hessian of $f$ is nondegenerate; \label{morse bott orbit assumption 1}
    \item For any submanifold $Y\subseteq\crit(f)$, we denote its unstable manifold (resp. stable manifold) by $W^u(Y)$ (resp. by $W^s(Y)$). With these notations, we assume $W^u(p)$ intersects $W^s(\mathcal{O})$ transversely for any critical point $p$ and any critical orbit $\mathcal{O}$. \label{morse bott orbit assumption 2}
\end{enumerate}
    \begin{remark}\normalfont
        The assumptions \ref{morse bott orbit assumption 1} and \ref{morse bott orbit assumption 2} are natural because they are equivalent to let $f$ be a Morse function with transversality conditions on $M/G$. See Section \ref{section of computations and examples} for examples.  
    \end{remark}

 We first construct the topological side, the $G$-invariant Thom-Smale complex associated to $f$ satisfying \ref{morse bott orbit assumption 1} and \ref{morse bott orbit assumption 2}. It is adapted from Austin and Braam's model \cite[Section 3]{Austin1995}. 
 
 Let $\mathcal{O}_i$ be the union of all critical orbits $\mathcal{O}\subseteq\crit(f)$ with Morse index $ = i$. By the assumption \ref{morse bott orbit assumption 2}, the endpoint map
$$\pi_i: W^u(\mathcal{O}_i)\to\mathcal{O}_i$$
gives a fiber bundle over $\mathcal{O}_i$. We let $\mathcal{E}_i$ be the orientation bundle of the fiber bundle $W^u(\mathcal{O}_i)$. 

By letting $\Omega^j(\mathcal{O}_i, \mathcal{E}_i)^G$ be the space of all $G$-invariant smooth $\mathcal{E}_i$-valued $j$-forms on $\mathcal{O}_i$, the space of $k$-chains is given by
\begin{align*}
    C^k(M,f)^G\coloneqq \bigoplus_{i+j = k}\Omega^j(\mathcal{O}_i, \mathcal{E}_i)^G.
\end{align*}
Then, we let $C^*(M,f)^G \coloneqq \oplus_{k = 0}^m C^k(M,f)^G$. Furthermore, we obtain a boundary map 
\begin{align}\label{partial map mentioned in advance less technical}
    \partial: C^k(M,f)^G\to C^{k+1}(M,f)^G.
\end{align}
based on the assumption \ref{morse bott orbit assumption 2}. 
Since \ref{morse bott orbit assumption 2} ensures the fiber bundle structure of the moduli spaces between critical orbits, the map $\partial$ is mainly given by integrating densities along the fibers of these moduli spaces. We give a detailed description of $\partial$ in (\ref{defn 1 of partial boundary map}) and (\ref{defn 2 of partial boundary map}). 
\begin{definition}\label{thom smale definition}
\normalfont
    The complex given by $C^*(M,f)^G$ $(k = 0,1,\cdots,m)$ and $\partial$ is called the $G$-invariant Thom-Smale complex of $M$ associated to $f$.
\end{definition}

Our first main result gives the topological side of the $G$-invariant Thom-Smale complex. For computing the Betti numbers of $M$ by this complex, see Examples \ref{example 1 of topological result}\text{\hspace{+0.5mm}$-$\hspace{+0.5mm}}\ref{example 4 of topological result}.
\begin{theorem}\label{thom smale cohomology theorem}
The $G$-invariant Thom-Smale complex of $M$ associated to $f$ satisfying {\normalfont\ref{morse bott orbit assumption 1}} and {\normalfont\ref{morse bott orbit assumption 2}}
is well-defined. It computes the de Rham cohomology of $M$. 
\end{theorem}
This $G$-invariant Thom-Smale complex provides us a simplified way to compute the cohomology of $M$ using Morse-Bott functions. The amount of $G$-invariant forms on critical orbits is much less than that of all the smooth forms, bringing us more convenience. 

The main idea to prove Theorem \ref{thom smale cohomology theorem} is to apply the spectral sequence method as in \cite[Section 3.3]{Austin1995}. We adapt necessary steps to our situation. 

Next, we present the analytic side, which is given by $G$-invariant eigenforms of the Witten Laplacian and admits a strong correspondence to the topological side. 


By adapting \cite[(5.15)]{wittendeformationweipingzhang}, we define the $G$-invariant Witten instanton complex of $M$ associated to $f$. We let $\Omega^k(M)^G$ be the collection of all $G$-invariant smooth $k$-forms on $M$. For any $T>0$ and $\alpha>0$, we let $d_T \coloneqq e^{-Tf}de^{Tf}$, $D_{T} \coloneqq d_T+d_T^*$ and 
$$F^k_T(M,f,\alpha)^G\coloneqq\text{span}_\mathbb{R}\left\{\omega\in\Omega^k(M)^G: D_T^2
\omega = \delta\cdot\omega \text{\ for some }0\leqslant\delta\leqslant \alpha\right\},$$
where $d_T^*$ is the formal adjoint of $d_T$, and $D_T^2$ is the Witten Laplacian.
Similar to \cite[(5.15)]{wittendeformationweipingzhang}, we have $D_T^2d_T = d_TD_T^2$, making $$d_T: F_T^k(M,f,\alpha)^G\to F_T^{k+1}(M,f,\alpha)^G$$ a well defined complex. We then let $F^*_T(M,f,\alpha)^G \coloneqq \oplus_{k = 0}^m F^k_T(M,f,\alpha)^G$. 
    
\begin{definition}\label{witten instanton complex definition}
\normalfont
     The complex given by $F^*_T(M,f,\alpha)^G$ $(k = 0,1,\cdots, m)$ and $d_{T}$ is called the $G$-invariant Witten instanton complex of $M$ associated to $f$. 
\end{definition}

By the $G$-invariant version of Hodge theory in Section \ref{section of Witten Laplacians and Hodge theory}, the $G$-invariant Witten instanton complex computes the de Rham cohomology of $M$ as well. 

We now have all the objects to do analysis. As in \cite[(5.11)]{wittendeformationweipingzhang}, we adjust the $G$-invariant metric $\langle\cdot,\cdot\rangle$ on $M$ (See Section \ref{sections of metrics and connections} for the precise adjustment) according to the $G$-action and the $G$-equivariant Morse-Bott lemma \cite[Lemma 4.1]{wassermanequivariant} around critical orbits. 

Moreover, by restricting the adjusted $\langle\cdot,\cdot\rangle$ to each critical orbit, we get a (twisted) Dirac type operator (See (\ref{definition of the twisted d plus d star rigorous definition here}) and Proposition \ref{twisted dirac type on critical orbits})
$$d+d^*: C^*(M,f)^G\to C^{*\pm 1}(M,f)^G.$$
It is self-adjoint with a finite spectral radius under the inner product (\ref{inner product local expression on each critical orbit making d+d^* self adjoint}).
We let $\alpha_0$ be the spectral radius of the twisted $(d+d^*)^2$ on $C^*(M,f)^G$.  

Our second main result realizes the $G$-invariant Thom-Smale complex analytically using the $G$-invariant Witten instanton complex. For the applications and extensions, see Corollaries \ref{application 1 of analytic result}\text{\hspace{+0.5mm}$-$\hspace{+0.5mm}}\ref{refinement of main result 2}. 
\begin{theorem}\label{isom given under a good metric be careful}
   We equip $M$ with the adjusted $\langle\cdot,\cdot\rangle$. For any $\alpha > \alpha_0$, the map
        \begin{align}\label{expression of adjusted chain isomorphism for section later}
    \Phi_{T}: F^k_T(M,f,\alpha)^G &\to C^k(M,f)^G \nonumber\\
    \omega&\mapsto\sum_{i = 0}^k(\pi_i)_*\left(\left.e^{Tf}\cdot\omega\right\vert_{\overline{W^u(\mathcal{O}_i)}}\right) \ \ (k = 0,1,\cdots,m)
\end{align}
is a chain isomorphism when $T$ is sufficiently large. 
\end{theorem}

The most interesting part of Theorem \ref{isom given under a good metric be careful} is $\alpha>\alpha_0$. It shows that on the chain complex level, the nonzero eigenvalues coming from the horizontal direction around $\crit(f)$ has a nontrivial impact, which may be hard to find if we only look at the Morse-Bott inequalities. 

Following \cite{bismutandlebeau}, the main idea to prove this theorem is to write $D_T$ into the summation $$\text{``horizontal operator''}+\text{``vertical operator''}+\text{``tail terms''}.$$ The horizontal part contributes to the spectral radius $\alpha_0$. The square of the vertical part along the normal direction around $\crit(f)$ is a harmonic oscillator, contributing to the necessity of a sufficiently large $T$. The tail term is controlled by choosing a sufficiently small tubular neighborhood of $\crit(f)$.

\begin{remark}\normalfont
  By \cite[Theorem 7.10]{bott2013differential}, $(\pi_i)_*$ integrates forms along fibers to get an $\mathcal{E}_i$-valued form. 
    Also, when $G$ is a torus, we have $\alpha_0 = 0$. In particular, in the torus case, if $\mathcal{O}$ is a critical orbit admitting nonorientable $W^u(\mathcal{O})$, the only $G$-invariant Thom-Smale chain on $\mathcal{O}$ is $0$.
     Moreover, by Examples \ref{TX and MN} and \ref{irrelevant to group actions}, $\alpha_0$ relies on both $M$ and $G$. 
\end{remark}

This paper is organized in the following order. First, in Sections \ref{section of Invariant Thom-Smale complexes} and \ref{section of Quasi-isomorphism in topology}, we clarify Definition \ref{thom smale definition} and prove Theorem \ref{thom smale cohomology theorem}. Then, in Sections \ref{section of Witten Laplacians and Hodge theory} and \ref{sections of metrics and connections}, we present prerequisites that are necessary for the subsequent analysis. Afterwards, in Sections \ref{section of spectral gaps} and \ref{section of chain isomorphisms}, by asymptotic analysis on $D_T$, we prove Theorem \ref{isom given under a good metric be careful}. Finally, in Section \ref{section of computations and examples}, we give some examples and corollaries of Theorem \ref{thom smale cohomology theorem} and Theorem \ref{isom given under a good metric be careful}. 

\vspace{+2mm}
\noindent\textbf{Acknowledgments.} I want to sincerely thank my academic supervisor \professor Xiang Tang for his invaluable guidance and many inspiring discussions. 

Meanwhile, I want to thank \professor Erkao Bao, \professor Aliakbar Daemi, \professor Xianzhe Dai, \professor Rui Loja Fernandes, \professor Nigel Higson, \professor Tian-Jun Li, \professor Yanli Song, \professor Li-Sheng Tseng, \professor Jinmin Wang, \professor Zhizhang Xie, \professor Guoliang Yu, \doctor Qiaochu Ma, \doctor Jesus Sanchez and PhD candidate Shengzhen Ning for their practical comments on this project, and \doctor Haley Dolosic for her lectures on academic writing. 

Moreover, I want to express my appreciation to the McDonnell Scholars Academy at WUSTL for the McDonnell Scholarship and the Lo Fellowship supporting my study. 

\section{Invariant Thom-Smale complex}\label{section of Invariant Thom-Smale complexes}
In this section, we explain the construction of the $G$-invariant Thom-Smale complex. To clarify our convention, we do not adjust the Riemannian metric $\langle\cdot,\cdot\rangle$ at present. We will only use the adjusted one in Sections \ref{sections of metrics and connections}, \ref{section of spectral gaps}, and \ref{section of chain isomorphisms}. 

As in \cite[(1.1)]{bgv}, 
we have a left $G$-action on $\mathcal{E}_i$-valued $j$-forms. It allows us to define $\Omega^j(\mathcal{O}_i, \mathcal{E}_i)^G$, the space of all $G$-invariant $\mathcal{E}_i$-valued smooth $j$-forms. Since $\mathcal{E}_i$ is a flat bundle, the de Rham $d$ is still well-defined on  $\Omega^j(\mathcal{O}_i, \mathcal{E}_i)^G$ (See \cite[\S7]{bott2013differential}). 

Let $\mathbb{B}^i\subseteq\mathbb{R}^i$ be the open unit ball of dimension $i$. Then, given an open cover $\{U_a\}$ of $\mathcal{O}_i$ and the associated local trivializations
$\varphi_a: U_a\times\mathbb{B}^i\to W^u(\mathcal{O}_i)$
of the fiber bundle $\pi_i: W^u(\mathcal{O}_i)\to\mathcal{O}_i$, we have transition maps
\begin{align*}
    \varphi_{ab}: U_a\cap U_b &\to \text{Diff}(\mathbb{B}^i)\\
    p&\mapsto \varphi^{-1}_a(p)\circ\varphi_b(p).
\end{align*}
Let $\sgn$ be the function on $\text{Diff}(\mathbb{B}^i)$ mapping orientation-preserving (resp. orientation-reversing) diffeomorphisms to $1$ (resp. $-1$).
The following transitions
$$\sgn\circ\varphi_{ab}: U_a\cap U_b\to\{\pm 1\}$$
define the orientation bundle $\mathcal{E}_i$. This $\mathcal{E}_i$ admits a $G$-action given as follows. For any $g\in G$, and any $(p,t)$ in a local trivialization $U_a\times\mathbb{R}$ of $\mathcal{E}_i$, we find another $U_b\times\mathbb{R}$ such that $gp\in U_b$. Then, $g(p,t)$ is defined to be $$\left(gp, t\cdot\sgn(\varphi_b^{-1}(gp)\circ g\circ\varphi_a(p))\right)\in U_b\times\mathbb{R}$$
if written in the local trivialization $U_b\times\mathbb{R}$. 

For any $r\in\mathbb{Z}_{\geqslant 0}$, we have a moduli space
 $$\mathcal{M}(\mathcal{O}_{i+r}, \mathcal{O}_{i})\coloneqq \left(W^u(\mathcal{O}_{i+r})\cap W^s(\mathcal{O}_{i})\right)/\mathbb{R}$$
consisting of the flow lines of $-\nabla f$ from $\mathcal{O}_{i+r}$ to $\mathcal{O}_{i}$. This ``quotient $\mathbb{R}$'' means quotient the time parameter of each flow line. 
Then, we have the following two natural maps $$\ell_{i}^{i+r}: \mathcal{M}(\mathcal{O}_{i+r}, \mathcal{O}_{i})\to \mathcal{O}_{i}\ \ \text{and}\ \ u_{i}^{i+r}: \mathcal{M}(\mathcal{O}_{i+r}, \mathcal{O}_{i})\to\mathcal{O}_{i+r}.$$ 
By \ref{morse bott orbit assumption 2}, $u_{i}^{i+r}$ makes $\mathcal{M}(\mathcal{O}_{i+r}, \mathcal{O}_{i})$ a fiber bundle over $\mathcal{O}_{i+r}$. 

Similar to \cite[Section Vl.4.c]{micheleaudin}, for any $\omega\in\Omega^j(\mathcal{O}_i, \mathcal{E}_i)^G$, we construct a form $$(u_i^{i+r})_*(\ell_i^{i+r})^*\omega\in\Omega^{j-r+1}(\mathcal{O}_{i+r},\mathcal{E}_{i+r})^G,$$ where $1\leqslant r\leqslant j+1$. In fact, $(\ell_i^{i+r})^*\omega\in\Omega^j\left(\mathcal{M}(\mathcal{O}_{i+r},\mathcal{O}_i),(\ell_i^{i+r})^*\mathcal{E}_i\right)$ is given by
$$\left((\ell_i^{i+r})^*\omega\right)_\gamma(\nu_1,\cdots,\nu_j) = \omega_{\ell^{i+r}_i(\gamma)}\left((d\ell_i^{i+r})\nu_1,\cdots, (d\ell_i^{i+r})\nu_j\right)\in \text{the\ fiber\ of\ }\mathcal{E}_i\text{\ at\ }\ell_i^{i+r}(\gamma)$$
for any $\gamma\in\mathcal{M}(\mathcal{O}_{i+r},\mathcal{O}_i)$ and any $\nu_1,\cdots,\nu_j\in T_\gamma\mathcal{M}(\mathcal{O}_{i+r},\mathcal{O}_i)$.

In addition, for any $q\in\mathcal{O}_{i+r}$ and $\gamma\in\mathcal{M}(q,\mathcal{O}_{i})$, there is the following isomorphism 
\begin{align}\label{hutchings notes isomorphism}
    T_qW^u(q)\cong T_\gamma\mathcal{M}(q,\mathcal{O}_i)\oplus T\gamma\oplus T_{\ell^{i+r}_i(\gamma)}W^u(\ell^{i+r}_i(\gamma))
\end{align}
according to
\cite[Section 6.2.3]{hutchingslecturenotes}. Thus, given an orientation on $W^u(q)$ and $j-r+1$ tangent vectors $w_1,\cdots,w_{j-r+1}\in T_q\mathcal{O}_{i+r}$, we get a density $\xi$ on $\mathcal{M}(q, \mathcal{O}_i)$ in this way: 
For any $\gamma\in\mathcal{M}(q, \mathcal{O}_i)$ and any $z_1,\cdots,z_{r-1}\in T_\gamma\mathcal{M}(q,\mathcal{O}_i)$, $\xi$ at $\gamma$ is given by  
\begin{align}\label{alpha technical}
\begin{split}
    &\xi_\gamma(z_1,\cdots,z_{r-1})\\
    =\  &\left((\ell^{i+r}_i)^*\omega\right)_\gamma\left(\left.\frac{d}{dt}\right|_{t = 0}\exp(te_1)\gamma, \cdots, \left.\frac{d}{dt}\right|_{t = 0}\exp(te_{j-r+1})\gamma,z_1,\cdots,z_{r-1}\right)\\
    \in\ & \mathcal{E}_i\cong\text{the\ orientation\ bundle\ of\ }\mathcal{M}(q,\mathcal{O}_i)\ \ \text{(the isomorphism is by (\ref{hutchings notes isomorphism}))},
    \end{split}
\end{align}
where $e_1,\cdots,e_{j-r+1}$ are any vectors in $\mathfrak{g}$ satisfying
\begin{align}\label{write vectors into lie algebra elements}
    w_1 = \left.\frac{d}{dt}\right|_{t = 0}\exp(te_1)q, \cdots, w_{j-r+1} = \left.\frac{d}{dt}\right|_{t = 0}\exp(te_{j-r+1})q.
\end{align}
The definition of $\xi$ is independent of the choice of $e_1,\cdots,e_{j-r+1}$. The integration of $\xi$ on $\mathcal{M}(q,\mathcal{O}_i)$ gives us a real number. However, if we choose another orientation on $W^u(q)$, we get the opposite value. With an abuse of notation, we say that
$$\int_{\mathcal{M}(q,\mathcal{O}_i)}\xi\in \mathcal{E}_{i+r}.$$
Then, $(u_i^{i+r})_*(\ell_i^{i+r})^*\omega$ at each point $q\in\mathcal{O}_{i+r}$ is defined by
$$\left((u_i^{i+r})_*(\ell_i^{i+r})^*\omega\right)_q(w_1,\cdots,w_{j-r+1}) = \int_{\mathcal{M}(q,\mathcal{O}_i)}\xi$$
for any $w_1,\cdots,w_{j-r+1}\in T_q\mathcal{O}_{i+r}$. 
\begin{lemma}
    This $(u_i^{i+r})_*(\ell_i^{i+r})^*\omega$ is $G$-invariant. 
\end{lemma}
\begin{proof}
    For $q\in\mathcal{O}_{i+r}$, we let $\varphi_a: U_a\times\mathbb{B}^{i+r}\to W^u(\mathcal{O}_{i+r})$ be the local trivialization around $q$. Then, for any $g\in G$, we let $$g^{-1}\circ\varphi_a\circ g: (g^{-1}U_a)\times\mathbb{B}^{i+r}\to W^u(\mathcal{O}_{i+r})$$ be the local trivialization around $g^{-1}q$. Under these two trivializations and their induced ones on $\mathcal{E}_{i+r}$, we find that 
    $$\sgn\left(\varphi_a^{-1}(q)\circ g\circ (g^{-1}\circ\varphi_a\circ g)(g^{-1}q)\right) = 1.$$
    Thus, 
    for any $w_1,\cdots,w_{j-r+1}\in T_q\mathcal{O}_{i+r}$, 
    \begin{align*}
    \left(g\cdot (u_i^{i+r})_*(\ell_i^{i+r})^*\omega\right)_q(w_1,\cdots,w_{j-r+1}) = \left((u_i^{i+r})_*(\ell_i^{i+r})^*\omega\right)_{g^{-1}q}(g^{-1}_{*}w_1,\cdots,g^{-1}_{*}w_{j-r+1}).
    \end{align*}
    With the same notations $e_1,\cdots, e_{j-r+1}$ as (\ref{write vectors into lie algebra elements}), for any $\tau\in \mathcal{M}(g^{-1}q, \mathcal{O}_i)$ and $\nu_1,\cdots,\nu_{r-1}\in T_\tau\mathcal{M}(g^{-1}q, \mathcal{O}_i)$, \text{since\ $\omega$\ is\ $G$-invariant}, we define a density $\beta$ on $\mathcal{M}(g^{-1}q,\mathcal{O}_i)$ and find
    \begin{align*}
           &\beta_\tau(\nu_1,\cdots,\nu_r)\\
        = &\left((\ell^{i+r}_i)^*\omega\right)_\tau\left(\left.\frac{d}{dt}\right|_{t = 0}\exp(t\adjoint_{g^{-1}}e_1)\tau, \cdots, \left.\frac{d}{dt}\right|_{t = 0}\exp(t\adjoint_{g^{-1}}e_{j-r+1})\tau,\nu_1,\cdots,\nu_{r-1}\right)\\
         =&\ \omega_{g^{-1}\ell^{i+r}_i(g\tau)}\left(\left.\frac{d}{dt}\right|_{t = 0}g^{-1}\exp(te_1)\ell^{i+r}_i(g\tau), \cdots, \left.\frac{d}{dt}\right|_{t = 0}g^{-1}\exp(te_{j-r+1})\ell^{i+r}_i(g\tau)\right.,\\
         & \qquad \qquad \qquad\qquad\qquad\qquad \qquad\qquad\qquad\qquad g^{-1}_*(d\ell^{i+r}_i)g_*\nu_1,\cdots,g^{-1}_*(d\ell^{i+r}_i)g_*\nu_{r-1}\biggr)\\
         = & \ g^{-1}\cdot\biggl[\omega_{\ell^{i+r}_i(g\tau)}\left(\left.\frac{d}{dt}\right|_{t = 0}\exp(te_1)\ell^{i+r}_i(g\tau), \cdots, \left.\frac{d}{dt}\right|_{t = 0}\exp(te_{j-r+1})\ell^{i+r}_i(g\tau),\right.\\
         & \qquad \qquad \qquad\qquad\qquad\qquad \qquad\qquad\qquad\qquad\qquad\quad(d\ell^{i+r}_i)g_*\nu_1,\cdots,(d\ell^{i+r}_i)g_*\nu_{r-1}\biggr)\biggr]\\
         = &\ g^{-1}\cdot \xi_{g\tau}(g_*\nu_1,\cdots,g_*\nu_{r-1}).
    \end{align*}
    According to \cite[Proposition 16.41]{lee2012introduction}, 
    \begin{align*}
        & \left((u_i^{i+r})_*(\ell_i^{i+r})^*\omega\right)_{g^{-1}q}(g^{-1}_{*}w_1,\cdots,g^{-1}_{*}w_{j-r+1}) = \int_{\mathcal{M}(g^{-1}q,\mathcal{O}_i)}\beta
        = \int_{\mathcal{M}(q,\mathcal{O}_i)}\xi,
    \end{align*}
    which is exactly $\left((u_i^{i+r})_*(\ell_i^{i+r})^*\omega\right)_q(w_1,\cdots,w_{j-r+1})$. 
\end{proof}

We now precisely define $\partial$ in (\ref{partial map mentioned in advance less technical}). For each $\Omega^j(\mathcal{O}_i, \mathcal{E}_i)^G$ and $r = 0,1,\cdots, j+1$, we define 
\begin{align}\label{defn 1 of partial boundary map}
\begin{split}
    \partial_r: \Omega^j(\mathcal{O}_i, \mathcal{E}_i)^G &\to\Omega^{j-r+1}(\mathcal{O}_{i+r}, \mathcal{E}_{i+r})^G\\
    \omega &\mapsto \begin{cases} 
      d\omega & \text{if}\ r = 0\\
      (-1)^j\left(u_{i}^{i+r}\right)_*\left(\ell_{i}^{i+r}\right)^*\omega & \text{if\ } 1\leqslant r\leqslant j+1.
   \end{cases}
   \end{split}
\end{align}
Here, $\left(u^{i+r}_{i}\right)_*$ is given by integrating densities along fibers. Then, we define  
\begin{align}\label{defn 2 of partial boundary map}
\begin{split}
    \partial: C^k(M,f)^G & \to C^{k+1}(M,f)^G \\
    \omega\in\Omega^{j}(\mathcal{O}_i, \mathcal{E}_i)^G&\mapsto\partial_0\omega+\partial_1\omega+\cdots+\partial_{j+1}\omega.
\end{split}
\end{align}
To verify that $\partial^2 = 0$, we need a generalized Stokes' theorem stated below.
\begin{lemma}\label{stokes}
    Suppose $X$ is a compact manifold with corner with orientation bundle $o(TX)$, and $\Psi: (-1, 0]\times Y\to X$ is a smooth embedding such that $\Psi(0\times Y)$ equals the codimension 1 stratum of $X$. Then, let $\Psi' = \Psi|_{0\times Y}$, we have the integral of density
    $$\int_X d\omega = \int_Y \left(\Psi'\right)^*\omega$$
    for all $\omega\in \Omega^{\dim X-1}(X,o(TX))$. 
\end{lemma}
\begin{proof}
This Lemma rephrases \cite[Theorem 16.48]{lee2012introduction}. 
\end{proof}

Similar to \cite[Proposition 3.5]{Austin1995}, to check $\partial^2 = 0$, we just need to show: 
\begin{proposition}\label{boundary map verified as austin braam}
    For each $r\in\mathbb{Z}_{\geqslant 0}$, $\partial_0\partial_r+\partial_1\partial_{r-1}+\cdots+\partial_{r-1}\partial_1+\partial_r\partial_0 = 0$. 
\end{proposition}
\begin{proof}
    By \cite[Lemma 2.6 and Lemma 3.3]{Austin1995}, we have a $G$-equivariant fiberwise embedding $$\Psi: (-\varepsilon, 0]\times \bigcup_{0<s<r}\left(\mathcal{M}(\calo_{i+r},\mathcal{O}_{i+s})\times_{\mathcal{O}_{i+s}}\mathcal{M}(\mathcal{O}_{i+s},\mathcal{O}_{i})\right)\to \overline{\calm(\calo_{i+r},\calo_i)}$$
    satisfying the conditions of Lemma \ref{stokes}. More precisely, 
    $$u_{i,\partial}^{i+r}: \bigcup_{0<s<r}\left(\mathcal{M}(\calo_{i+r},\mathcal{O}_{i+s})\times_{\mathcal{O}_{i+s}}\mathcal{M}(\mathcal{O}_{i+s},\mathcal{O}_{i})\right)\to \calo_{i+r}$$
    is a fiber bundle over $\calo_{i+r}$, and $\Psi$ maps $$0\times \bigcup_{0<s<r}\left(\mathcal{M}(q,\mathcal{O}_{i+s})\times_{\mathcal{O}_{i+s}}\mathcal{M}(\mathcal{O}_{i+s},\mathcal{O}_{i})\right).$$
    onto the codimension-$1$ stratum of $\overline{\calm(q,\calo_i)}$. 
    Then, for any $\omega\in\Omega^j(\calo_i,\mathcal{E}_i)^G$, using Lemma \ref{stokes} and the steps in \cite[Proposition 3.5]{Austin1995}, we let $\Psi'$ be the map as in Lemma \ref{stokes} and find
    \begin{align*}
        & \partial_r\partial_0\omega\\
        =\ & (-1)^{j+1}(u_i^{i+r})_*\left(d(\ell^{i+r}_i)^*\omega)\right)\\
        =\ & (-1)^{j+1}d(u_i^{i+r})_*(\ell^{i+r}_i)^*\omega + (-1)^{j+1+j-(r-1)+1} (u^{i+r}_{i,\partial})_*(\Psi')^*(\ell^{i+r}_i)^*\omega \\
        =\ & -\partial_0\partial_r\omega + (-1)^{-r+1}(u^{i+r}_{i,\partial})_*(\Psi')^*(\ell^{i+r}_i)^*\omega 
    \end{align*}
Here, the map $(u_{i,\partial}^{i+r})_*$ arises as follows. Given any $q\in\calo_{i+r}$, once we choose an orientation of $W^u(q)$ and any  $w_1,\cdots,w_{j-r+1}\in T_q\mathcal{O}_{i+r}$, the bundle-valued form $d(\ell^{i+r}_i)^*\omega$ gives us a density $d\xi$ on $\calm(q,\calo_i)$ as in (\ref{alpha technical}). Then, applying Lemma \ref{stokes}, we integrate the density $(\Psi')^*\xi$ on the codimension-$1$ stratum $$\bigcup_{0<s<r}\left(\calm(q,\mathcal{O}_{i+s})\times_{\mathcal{O}_{i+s}}\mathcal{M}(\mathcal{O}_{i+s},\mathcal{O}_{i})\right)$$ and get a number, which then gives the form $(u^{i+r}_{i,\partial})_*(\Psi')^*(\ell^{i+r}_i)^*\omega\in\Omega^{j-r+2}(\calo_{i+r},\mathcal{E}_{i+r})^G$. 

Finally, using \cite[Lemma 3.4]{Austin1995} and the following commutative diagram $$\begin{tikzcd}
   &  & \calo_i 
\\
   & \arrow[dl,"u^{i+r}_{i,\partial}"'] \mathcal{M}(\calo_{i+r},\calo_{i+s})\times_{\calo_{i+s}}{\mathcal{M}}(\calo_{i+s}, \calo_i) \arrow[r,""] \arrow[ur,""] \arrow[d, ""] & \mathcal{M}(\calo_{i+s}, \calo_i) \arrow[u, "\ell_i^{i+s}"'] \arrow[d,"u_i^{i+s}"] \\
  \calo_{i+r} &\arrow[l,"u_{i+s}^{i+r}"] \mathcal{M}(\calo_{i+r},\calo_{i+s}) \arrow[r,"\ell^{i+r}_{i+s}"]  & \calo_{i+s}
\end{tikzcd},$$
we rewrite the second summand into
\begin{align*}
      & (-1)^{-r+1}(u^{i+r}_{i,\partial})_*(\Psi')^*(\ell^{i+r}_i)^*\omega\\
    =\ & (-1)^{-r+1}\sum_{0<s<r}(-1)^{r-s-1}(u^{i+r}_{i+s})_*(\ell_{i+s}^{i+r})^*(u^{i+s}_{i})_*(\ell_{i}^{i+s})^*\omega\\
    =\ & -\sum_{0<s<r}\partial_{r-s}\partial_s\omega.
\end{align*}
Thus, we have $\partial_0\partial_r+\partial_1\partial_{r-1}+\cdots+\partial_{r-1}\partial_1+\partial_r\partial_0 = 0$ and then $\partial^2 = 0$.
\end{proof}
The first half of Theorem \ref{thom smale cohomology theorem} is proved.

\section{Topological realization}\label{section of Quasi-isomorphism in topology}
In this section, we prove the second half of Theorem \ref{thom smale cohomology theorem} following the steps similar to 
\cite[Theorem 3.8]{Austin1995} and using the nonorientable Thom isomorphism \cite[Theorem 7.10]{bott2013differential}.

\begin{definition}\normalfont
    The $G$-invariant de Rham complex of $M$ is formed by $$\Omega^k(M)^G\coloneqq\{\omega\in\Omega^k(M): g^*\omega = \omega \text{\ for\ all\ }g\in G\}$$
together with the de Rham $d$. We let $H^k(M)^G$ be the $k$-th cohomology group of this complex, and $\Omega^*(M)^G \coloneqq \oplus_{k = 0}^m\Omega^k(M)^G$.  
\end{definition} 

 By \cite[Theorem 1.28]{felix2008algebraicmodels}, $H^k(M)^G$ is isomorphic to the original de Rham cohomology group of $M$. 
Thus, to prove Theorem \ref{thom smale cohomology theorem}, we need a quasi-isomorphism between the $G$-invariant Thom-Smale complex and the $G$-invariant de Rham complex. 

Using the fiber bundle structure
    $\pi_i: W^u(\calo_i)\to \calo_i$
and the associated integration along fibers $(\pi_i)_*$ defined in \cite[Theorem 7.10]{bott2013differential}, we get a map
\begin{align*}
    \Phi: \Omega^k(M)^G &\to C^k(M,f)^G \\
    \omega&\mapsto\sum_{i = 0}^k(\pi_i)_*\left(\left.\omega\right\vert_{\overline{W^u(\mathcal{O}_i)}}\right) \ \ (k = 0,1,\cdots,m).
\end{align*}
\begin{proposition}\label{chain map verifications}
    The map $\Phi$ is a chain map.  
\end{proposition}
\begin{proof}
    Similar to \cite[Lemma 3.6]{Austin1995}, we check that for any $\omega\in\Omega^k(M)^G$ and any $0\leqslant i\leqslant k$, 
    $$(\pi_i)_*\left(\left.d\omega\right\vert_{\overline{W^u(\mathcal{O}_i)}}\right) = \sum_{j = 0}^i\partial_{j}(\pi_{i-j})_*\left(\left.\omega\right\vert_{\overline{W^u(\mathcal{O}_{i-j})}}\right)$$
    for all $\omega\in\Omega^k(M)^G$. By \cite[Lemmata 2.6 and 3.3]{Austin1995}, we have a $G$-equivariant embedding 
    $$\Psi: (-\varepsilon,0]\times \bigcup_{0<j<i}\calm(\calo_i,\calo_{i-j})\times_{\calo_{i-j}} W^u(\calo_{i-j})\to W^u(\calo_i).$$
    onto a neighborhood of the codimension $1$ stratum of $\overline{W^u(\calo_i)}$. 
    By Lemma \ref{stokes}, let $\Psi'$ be the restriction of $\Psi$ to 
    $$0\times \bigcup_{0<j<i}\calm(\calo_i,\calo_{i-j})\times_{\calo_{i-j}} W^u(\calo_{i-j}),$$ we find that 
    \begin{align*}
        & (\pi_i)_*\left(\left.d\omega\right\vert_{\overline{W^u(\mathcal{O}_i)}}\right)\\
        =\ & d \left((\pi_i)_*\left(\omega|_{\overline{W^u(\calo_i)}}\right)\right)+(-1)^{k-i+1}\left(\pi_{i,\partial}\right)_*(\Psi')^*\left(\omega|_{\overline{W^u(\calo_i)}}\right)\\
        =\ & \partial_0(\pi_i)_*\left(\omega|_{\overline{W^u(\calo_i)}}\right)+(-1)^{k-i+1}\left(\pi_{i,\partial}\right)_*(\Psi')^*\left(\omega|_{\overline{W^u(\calo_i)}}\right),
    \end{align*}
    where $(\pi_{i,\partial})_*$ is induced by the fiber bundle structure
    \begin{align*}
        \pi_{i,\partial}: \bigcup_{0<j<i}\calm(\calo_i,\calo_{i-j})\times_{\calo_{i-j}} W^u(\calo_{i-j})\to \calo_{i}.
    \end{align*}
    of the codimension-1 stratum of $\overline{W^u(\mathcal{O}_i)}$. 
    By \cite[Lemma 3.4]{Austin1995} and the commutative diagram 
    $$\begin{tikzcd}
   & \arrow[dl,"\pi_{i,\partial}"'] \mathcal{M}(\calo_i,\calo_{i-j})\times_{\calo_{i-j}}W^u(\calo_{i-j}) \arrow[r,""] \arrow[d, ""] & W^u(\calo_{i-j})  \arrow[d,"\pi_{i-j}"] \\
  \calo_i &\arrow[l,"u_{i-j}^{i}"] \mathcal{M}(\calo_i,\calo_{i-j}) \arrow[r,"\ell^{i}_{i-j}"]  & \calo_{i-j} 
\end{tikzcd},$$
    we get 
\begin{align*}
    & (-1)^{k-i+1}\left(\pi_{i,\partial}\right)_*\left(\omega|_{\overline{W^u(\mathcal{O}_i)}}\right)\\
    =\ & (-1)^{k-i+1}\sum_{j = 1}^{i} (-1)^{j-1}\left(u_{i-j}^i\right)_*(\ell_{i-j}^i)^*(\pi_{i-j})_*\left(\omega|_{\overline{W^u(\calo_{i-j})}}\right)\\
    =\ & \sum_{j = 0}^i\partial_{j}(\pi_{i-j})_*\left(\left.\omega\right\vert_{\overline{W^u(\mathcal{O}_{i-j})}}\right)
\end{align*}
Thus, $\Phi$ is a chain map. 
\end{proof} 

Next, we show that $\Phi$ is a quasi-isomorphism. Similar to \cite[Section 3.3]{Austin1995}, without loss of generality, we assume that $f(\calo_i) = i$ for all $0\leqslant i\leqslant m$. Then, for any $r\in\mathbb{Z}_{\geqslant 0}$, we let $$M_r = f^{-1}\left(r-\frac{1}{2},+\infty\right),\ \ M\backslash M_r = f^{-1}\left(-\infty, r-\frac{1}{2}\right]$$
so that for the $G$-invariant de Rham complex $(\Omega^*(M)^G,d)$, we get a filtration
$$\cdots\subseteq\Omega_{c}^k(M_{r+1})^G\subseteq \Omega_{c}^k(M_r)^G\subseteq\cdots\subseteq \Omega_{c}^k(M_0)^G = \Omega^k(M)^G$$ of $\Omega^k(M)^G$ for each $k$. Here, $\Omega^k_{c}(M_r)^G$ means the space of smooth $G$-invariant compactly supported $k$-forms on $M_r$. Furthermore, by defining $$C^k_{r}(M,f)^G = \bigoplus_{i\geqslant r}\Omega^{k-i}(\calo_i, \mathcal{E}_i)^G,$$
for the $G$-invariant Thom-Smale complex $(C^*(M,f)^G,\partial)$, we get a filtration 
$$\cdots\subseteq C_{r+1}^k(M,f)^G\subseteq C_{r}^k(M,f)^G\subseteq\cdots\subseteq C_{0}^k(M,f)^G= C^k(M,f)^G$$
of $C^k(M,f)^G$ for each $k$ as well. Let $$\graded\Omega_r^k = \Omega^k_{c}(M_r)^G/\Omega^k_{c}(M_{r+1})^G,$$ and $$\graded C_r^k = C^k_{r}(M,f)^G/C^k_{r+1}(M,f)^G = \Omega^{k-r}(\calo_r, \mathcal{E}_r)^G.$$ Then, for each $r$, we get two chain complexes
\begin{align*}
  (\graded\Omega_r^*,d): 0\xlongrightarrow[]{}\graded\Omega^0_r\xlongrightarrow[]{d}\graded\Omega^1_r\xlongrightarrow[]{d}\cdots\xlongrightarrow[]{d}\graded\Omega^m_r\xlongrightarrow[]{}0
\end{align*}
and 
\begin{align*}
   (\graded C_r^*,d): 0\xlongrightarrow[]{}\graded C^0_r\xlongrightarrow[]{\partial = d}\graded C^1_r\xlongrightarrow[]{\partial = d}\cdots\xlongrightarrow[]{\partial = d}\graded C^m_r\xlongrightarrow[]{}0.
\end{align*}
Let $H^k(\graded\Omega^*_r)$ and $H^k(\graded C_r^*)$ be their $k$-th cohomology groups respectively.

By \cite[Lemma 3.7]{Austin1995}, if for any $r$ and $k$, the chain map $\Phi$ induces an isomorphism between $H^k(\graded\Omega^*_r)$
and 
$H^k(\graded C_r^*)$, then $\Phi$ must be a quasi-isomorphism between the $G$-invariant Thom-Smale complex and the $G$-invariant de Rham complex. In addition, we notice that
\begin{align}\label{de Rham twisted with coefficients}
H^k(\graded C_r^*) = H^{k-r}(\calo_r,\mathcal{E}_r)^G,
\end{align}
where the latter is the $(k-r)$-th $G$-invariant de Rham cohomology group of $\mathcal{O}_r$ with local coefficient $\mathcal{E}_r$. Thus, to check the quasi-isomorphism, we need the following proposition. 
\begin{proposition}
\label{morse bott smale chain complex cohomology proof without analysis}
    For any $r$ and $k$, 
    $$\Phi: H^k(\graded\Omega_r^*)\to H^{k-r}(\calo_r,\mathcal{E}_r)^G$$
    is an isomorphism. 
\end{proposition}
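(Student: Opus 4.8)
The plan is to compare the two associated graded complexes $G\Omega_n^*$ and $GC_n^*$ and to check that $\Phi^*$ realizes the comparison. On the Morse--Bott--Smale side the graded complex is immediate: each $\partial_\alpha$ with $\alpha\ge 1$ strictly increases the index-filtration degree, so on $GC_n^* = C_n^*(M,f)/C_{n+1}^*(M,f) = \Omega^{*-n}_{\inv}(S_n^{\oriented})$ only $\partial_0 = d$ survives, and hence $H^k(GC_n^*) = H^{k-n}(\Omega^*_{\inv}(S_n^{\oriented}),d)$. One also checks that $\Phi^*$ is filtration-preserving: if $\omega\in\Omega^k_{\inv,c}(M_{n+1})$ then $\Phi^k_i(\omega) = (u^i)_*\bigl(\omega|_{W^u(S_i^{\oriented})}\bigr) = 0$ for every $i\le n$, because $W^u(S_{i,\cdot})\subseteq f^{-1}(-\infty,i]\subseteq M\setminus M_{n+1}$, whereas the summands with $i>n$ lie in $C^k_{n+1}(M,f)$ for free; so $\Phi^*$ descends to a chain map $G\Omega_n^*\to GC_n^*$ which in total degree $k$ is $[\omega]\mapsto (u^n)_*\bigl(\omega|_{W^u(S_n^{\oriented})}\bigr)$. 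It remains to prove this induced map is an isomorphism on cohomology.

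Next I would give a geometric description of $H^k(G\Omega_n^*)$. After replacing the constant $\tfrac12$ in the definition of $M_n$ by a sufficiently small $\epsilon>0$ (which changes $G\Omega_n^*$ only up to canonical quasi-isomorphism), the normal form $f\circ\varphi_{n,i}(v) = n - \tfrac12|P_{n,i}v|^2 + \tfrac12|(1-P_{n,i})v|^2$ of Lemma \ref{morse-bott lemma equivariant without using local coordinates} identifies $Z_i := W^u(S_{n,i})\cap M_n$ with $\varphi_{n,i}\bigl(\{v\in(NS_{n,i})^u : \tfrac12|v|^2<\epsilon\}\bigr)$; in particular $Z_i$ is a closed $\mathbb{T}^l$-submanifold of $M_n$, $\mathbb{T}^l$-equivariantly diffeomorphic over $S_{n,i}$ to the total space of the rank-$n$ unstable normal bundle $(NS_{n,i})^u$, with bundle projection $u^n$. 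Write $Z=\coprod_i Z_i$ over the index-$n$ critical orbits; then $Z\cap M_{n+1}=\emptyset$, and $M_{n+1}$ together with the $\mathbb{T}^l$-invariant tubular neighbourhood $\mathcal U := \coprod_i\varphi_{n,i}(NS_{n,i}(8r))\cap M_n$ of $Z$ forms an open $\mathbb{T}^l$-cover of $M_n$. The crucial step --- the $\mathbb{T}^l$-equivariant, compactly-supported analogue of \cite[Lemma 3.7]{Austin1995} and of the usual handle-attachment picture at a critical level --- is that restriction of forms to $Z$ induces an isomorphism $H^k(G\Omega_n^*)\xrightarrow{\ \sim\ }\bigoplus_i H^k_c(Z_i)$. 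Concretely, a $\mathbb{T}^l$-invariant partition of unity subordinate to $\{M_{n+1},\mathcal U\}$ gives an excision isomorphism of complexes $G\Omega_n^*\cong\Omega^*_{\inv,c}(\mathcal U)/\Omega^*_{\inv,c}(\mathcal U\cap M_{n+1})$; and the $\mathbb{T}^l$-equivariant negative gradient flow on $\mathcal U$, which (as one reads off from the normal form, where $f = n - \tfrac12|v^u|^2 + \tfrac12|v^s|^2$ and the flow expands $v^u$ while contracting $v^s$) carries $\mathcal U\setminus Z$ into $\mathcal U\cap M_{n+1}$ and retracts $Z$ onto the critical orbits, furnishes a $\mathbb{T}^l$-equivariant chain homotopy exhibiting $\Omega^*_{\inv,c}(\mathcal U)/\Omega^*_{\inv,c}(\mathcal U\cap M_{n+1})\simeq\bigoplus_i\Omega^*_{\inv,c}(Z_i)$, compatibly with the $(u^n)_*$.

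With this identification in hand the rest follows from results already established. If $W^u(S_{n,i})$ is nonorientable, then $(NS_{n,i})^u$ is a nonorientable $\mathbb{R}^{n_{n,i}}$-equivariant vector bundle over the torus $S_{n,i}\cong(\mathbb{R}/\mathbb{Z})^{n_{n,i}}$, so $H^k_c(Z_i)\cong H^k_c\bigl((NS_{n,i})^u\bigr)=0$ by Lemma \ref{mobius bundle of cohomology 0} (equivalently Corollary \ref{unstable manifold nonorientable, de rham vanishes}); those summands drop out, leaving exactly the orbits that make up $S_n^{\oriented}$. If $W^u(S_{n,i})$ is orientable, then $(NS_{n,i})^u\to S_{n,i}$ is an oriented rank-$n$ $\mathbb{T}^l$-equivariant vector bundle with projection $u^n$, and the $\mathbb{T}^l$-equivariant Thom isomorphism for invariant forms --- realized precisely by integration along the fiber $(u^n)_*$ --- gives $(u^n)_*\colon H^k_c(Z_i)\xrightarrow{\ \sim\ }H^{k-n}(\Omega^*_{\inv}(S_{n,i}),d)$. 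Summing over the orientable index-$n$ orbits and composing with the isomorphism $H^k(G\Omega_n^*)\cong\bigoplus_i H^k_c(Z_i)$ of the previous paragraph, the map induced by $\Phi^k$ is seen to be the isomorphism $H^k(G\Omega_n^*)\xrightarrow{\ \sim\ }H^{k-n}(\Omega^*_{\inv}(S_n^{\oriented}),d)$.

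The main obstacle is the geometric step of the second paragraph: verifying, $\mathbb{T}^l$-equivariantly and for compactly supported invariant forms, that $G\Omega_n^*$ ``sees'' only the unstable disk bundles $Z_i$ of the index-$n$ critical orbits and that restriction to them is a quasi-isomorphism. After the excision reduction this becomes a Mayer--Vietoris / deformation-retraction computation entirely inside the Morse--Bott charts, where $f$ is explicit; the one point that genuinely needs care --- exactly as in Austin and Braam's original treatment --- is that the gradient-flow retractions involved are not proper, so one must either build the chain homotopy by hand from the flow or verify directly that each relevant inclusion of open sets induces an isomorphism on compactly supported (invariant) cohomology.
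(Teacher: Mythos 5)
Your overall strategy matches the paper's: identify the associated graded $GC_n^*$ with $\Omega^{*-n}_{\inv}(S_n^{\oriented})$ under $\partial_0 = d$, check $\Phi^*$ is filtration-preserving, factor the graded map through restriction to the unstable disk neighbourhood $V_n = M_n\cap W^u(S_n)$ followed by integration along the fiber, and then invoke Corollary~\ref{unstable manifold nonorientable, de rham vanishes} to kill the nonorientable summands and the invariant Thom isomorphism for the orientable ones. All of that is exactly what the paper does, and your filtration-preservation argument (via $W^u(S_{i,\cdot})\subseteq M\setminus M_{n+1}$ for $i\le n$) is correct.

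The one place you genuinely diverge is the step you yourself flag as the obstacle: showing that restriction induces an isomorphism $H^k(G\Omega_n^*)\to H^k(\Omega^*_{\inv,c}(V_n),d)$. You propose to do this entirely inside compactly supported invariant cohomology, by an excision reduction to $\Omega^*_{\inv,c}(\mathcal U)/\Omega^*_{\inv,c}(\mathcal U\cap M_{n+1})$ and then a chain homotopy built by hand from the negative gradient flow. The paper instead passes to $\mathbb{T}^l$-invariant \emph{relative} de~Rham cohomology: it compares the short exact sequence defining $G\Omega_n^*$ with the short exact sequence of relative complexes for the pairs $(M, M\setminus M_n)$, uses the identification $H^k(\Omega^*_{\inv,c}(M_n),d)\cong H^k(\Omega^*_\inv(M,M\setminus M_n),d)$ and the five lemma to identify $H^k(G\Omega_n^*)$ with $H^k(\Omega^*_\inv(M\setminus M_{n+1}, M\setminus M_n),d)$, and only then applies the gradient-flow deformation retraction and excision, landing at $H^k(\Omega^*_\inv(V_n,\partial V_n),d)\cong H^k(\Omega^*_{\inv,c}(V_n),d)$. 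This reroute through relative cohomology is precisely what sidesteps the non-properness problem you correctly identify: homotopy invariance of relative de~Rham cohomology is a statement about homotopy equivalences of pairs and does not require the flow retraction to be proper, whereas in the compactly supported picture one must be careful (pullback along a non-proper map need not preserve compact support, and one has to use pushforward/open-inclusion arguments instead). Your route can be made to work, but it requires exactly the extra verification you flag; the paper's relative-cohomology detour packages that verification into standard homotopy invariance plus the five lemma, which is why it is the cleaner presentation.

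One small additional remark: you shrink $M_n$ by replacing $\tfrac12$ with a small $\epsilon$ so that $Z_i$ is an honest disk bundle in the Morse--Bott chart. The paper does not do this; it works directly with $V_n = M_n\cap W^u(S_n)$, which is diffeomorphic to the total space $(NS_{n,i})^u$ of the unstable normal bundle regardless of the choice of level (this is implicit in the sentence preceding Corollary~\ref{unstable manifold nonorientable, de rham vanishes}). Your adjustment is harmless but unnecessary.
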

\begin{proof}
We let $V_r = M_r\cap W^u(\calo_r)$ and $H^k_{c}(V_r)^G$ be its $k$-th $G$-invariant compactly supported de Rham cohomology group. 
By the nonorientable Thom isomorphism \cite[Theorem 7.10]{bott2013differential}, we find that 
$\Phi$ factors into 
$$H^k(\graded\Omega_r^*)\xrightarrow{\text{by\ restriction}} H^k_{c}(V_r)^G\xrightarrow{\text{Thom\ isomorphism}} H^{k-r}(\calo_r,\mathcal{E}_r)^G,$$
where the Thom isomorphism is given by the integration along each fiber. 
Now, to finish the proof, we must show that the restriction 
$$H^k(\graded\Omega_r^*)\to H^k_{c}(V_r)^G$$
is an isomorphism. 

We recall that for any embedded submanifold $S\subseteq M$, there is a relative de Rham cohomology \cite[Section I.6]{bott2013differential} for the pair $(M,S)$. If $S$ is $G$-invariant, we get the $G$-invariant version of the relative de Rham cohomology. For $0\leqslant k\leqslant m$, we let 
$$\Omega^k(M,S)^G = \Omega^k(M)^G\oplus\Omega^{k-1}(S)^G$$
and extend the de Rham differentiation to  
\begin{align*}
    d: \Omega^k(M,S)^G&\to\Omega^{k+1}(M,S)^G\\
    (\alpha,\beta)&\mapsto (d\alpha, \alpha|_S-d\beta)
    \end{align*}
for the relative case. As in \cite[Section 6.7]{bott2013differential}, this defines the $G$-invariant de Rham complex of $(M,S)$, and we let $H^k(M,S)^G$ be the $k$-th cohomology group of this complex. 

We consider the following two short exact sequences $$0\to\Omega_{c}^k(M_{r+1})^G\to\Omega_{c}^k(M_r)^G\to \graded\Omega^k_r\to 0$$
and 
$$0\to\Omega^k(M, M\backslash M_{r+1})^G\to\Omega^k(M, M\backslash M_{r})^G\to \Omega^k(M\backslash M_{r+1},M\backslash M_r)^G\to 0.$$
By \cite[Theorem 2.16]{allenhatcher2002}, they induce long exact sequences of cohomology groups $H^k_{c}(\cdot)^G$ and $H^k(\cdot)^G$ respectively. Then, we have the commutative diagram
$$
\begin{tikzcd}[scale cd=1]
  \cdots \to H^k_{c}(M_{r+1})^G \arrow[d, "\ "] \arrow[r] & H^k_{c}(M_{r})^G \arrow[d, "\ "] \arrow[r] & H^k(\graded\Omega^*_r) \arrow[d, "\ "] \to\cdots \\
  \cdots \to H^k(M, M\backslash M_{r+1})^G \arrow[r] & H^k(M, M\backslash M_{r})^G \arrow[r] & H^k(M\backslash M_{r+1},M\backslash M_r)^G\to\cdots
\end{tikzcd}
$$
between two long exact sequences. Since for any $k$ and $r$, $$H^k_{c}(M_{r})^G\to H^k(M, M\backslash M_{r})^G$$ is an isomorphism by the definition of the $G$-invariant relative de Rham complex, the map 
$$H^k(\graded\Omega^*_r)\to H^k(M\backslash M_{r+1},M\backslash M_r)^G$$
is an isomorphism according to the five lemma (See the proof of \cite[Theorem 2.27]{allenhatcher2002}). Now, we obtain $$H^k(M\backslash M_{r+1},M\backslash M_r)^G\cong H^k((M\backslash M_{r})\cup V_r, M\backslash M_r)^G$$ using the deformation retraction along the flow lines of $\nabla f$. 
Then, by excision, we find  $$H^k((M\backslash M_{r})\cup V_r, M\backslash M_r)^G\cong H^k(V_r, \partial V_r)^G\cong H^k_{c}(V_r)^G.$$
Thus, the restriction map $H^k(\graded\Omega_r^*)\to H^k_{c}(V_r)^G$ is an isomorphism. 
\end{proof}
The second half of Theorem \ref{thom smale cohomology theorem} is thus a corollary of Proposition \ref{morse bott smale chain complex cohomology proof without analysis}. 

\section{Witten Laplacian and Hodge theory}\label{section of Witten Laplacians and Hodge theory}
In this section, we briefly review the $G$-invariant version of Hodge theory and the $G$-invariant Witten instanton complex associated to $f$. 

\begin{remark}\normalfont
    The results in this section only require the metric $\langle\cdot,\cdot\rangle$ and the function $f$ to be $G$-invariant. The assumptions \ref{morse bott orbit assumption 1} and \ref{morse bott orbit assumption 2} are required in all other sections. 
\end{remark}

It is straightforward to check that for any $\omega\in\Omega^*(M)$, $$d_{T}\omega = e^{-Tf}d\left(e^{Tf}\omega\right) = d\omega+Tdf\wedge\omega,$$ 
and thus $$d_T^*\omega = d^*\omega + T\nabla f\lrcornerwiththings\omega.$$ 
For any vector field $X$ and form $\omega$ on $M$, we define $X^*\coloneqq \langle X,\cdot\rangle$, then $$\hat{c}(X)\omega \coloneqq X^*\wedge\omega+X\lrcornerwiththings\omega,$$ and $$c(X)\omega \coloneqq X^*\wedge\omega - X\lrcornerwiththings\omega.$$ Since $f$ and $\langle\cdot,\cdot\rangle$ are both $G$-invariant, the Dirac type operator
$$D_{T}\coloneqq d_{T}+d^*_{T} = d+d^*+T\hat{c}(df): \Omega^{\text{odd/even}}(M)^G\to\Omega^{\text{even/odd}}(M)^G$$
is well-defined on $G$-invariant forms. Recall that $D_{T}^2$ is called the Witten Laplacian.

Let $H^k_T(M)^G$ be the $k$-th cohomology group of the chain complex
\begin{align}\label{witten twisted complex}
    0\xrightarrow[]{}\Omega^0(M)^G\xrightarrow[]{d_{T}}\cdots\xrightarrow[]{d_{T}}\Omega^{m}(M)^G\xrightarrow[]{}0.
\end{align}
For the same reason as \cite[Proposition 5.3]{wittendeformationweipingzhang}, an isomorphism $H^k_T(M)^G\cong H^k(M)^G$ is induced by the map $\omega\mapsto e^{Tf}\omega$. 
Moreover, (\ref{witten twisted complex}) admits a $G$-invariant Hodge decomposition.

\begin{proposition}\label{invariant Hodge decomposition}
    For each $0\leqslant k\leqslant m$, $\Omega^k(M)^G$ has an orthogonal decomposition $$\ker(D_{T}^2: \Omega^k(M)^G\to\Omega^k(M)^G)\oplus\image(D_{T}^2: \Omega^k(M)^G\to\Omega^k(M)^G)$$ with respect to the $L^2$-norm on $\Omega^k(M)^G$ induced by $\langle\cdot,\cdot\rangle$. Moreover, we have an isomorphism 
    \begin{align*}
        \ker(D_{T}^2: \Omega^k(M)^G\to\Omega^k(M)^G)&\cong H^k_T(M)^G\\
        \omega&\mapsto \text{the equivalence class of\ } \omega
    \end{align*}
    between the kernel and the cohomology group.
\end{proposition}
\begin{proof}
    Applying the Gårding inequality \cite[Lemma 10.4.8]{nicolaescu2020lectures} and the Poincar\'e inequality \cite[Lemma 10.4.9]{nicolaescu2020lectures} to $G$-invariant forms, the proof is the same as the original Hodge theorem.
\end{proof}
Recall that 
$$F^k_T(M,f,\alpha)^G = \text{span}_\mathbb{R}\left\{\omega\in\Omega^k(M)^G: D_{T}^2\omega = \lambda\omega\text{\ for\ some\ }0\leqslant\lambda\leqslant\alpha\right\}$$
and that the $G$-invariant Witten instanton complex is given by
\begin{align}\label{witten instanton complex}
    0\xrightarrow[]{}F^0_T(M,f,\alpha)^G\xrightarrow[]{d_{T}}\cdots\xrightarrow[]{d_{T}}F^{m}_T(M,f,\alpha)^G\xrightarrow[]{}0
\end{align}
    Applying Proposition \ref{invariant Hodge decomposition}, 
we see that the $G$-invariant Witten instanton complex also computes the de Rham cohomology of $M$: 
\begin{corollary}
    The two complexes {\normalfont (\ref{witten twisted complex})} and {\normalfont (\ref{witten instanton complex})} computes the same cohomology. 
\end{corollary}
Up to now, we have obtained several complexes that are under the $G$-action and compute the de Rham cohomology of $M$. However, only (\ref{witten instanton complex}) provides the analytic realization that we want in Theorem \ref{isom given under a good metric be careful}.

\section{Metrics and Connections}\label{sections of metrics and connections}
In this section, we complete the following two preparations for the estimates of $D_{T}$ and $D_T^2$: 
\begin{enumerate}[label = (\arabic*)]
    \item Adjusting the original metric $\langle\cdot,\cdot\rangle$ according to the $G$-action and the $G$-equivariant Morse-Bott lemma \cite[Lemma 4.1]{wassermanequivariant} around each critical orbit. 
    \item Figuring out how the Levi-Civita connection associated to the adjusted $\langle\cdot,\cdot\rangle$ acts on local frames around each critical orbit. 
\end{enumerate}

Given any critical orbit $\calo$ with dimension $ = n$ and Morse index $ = i$, we let $N$ (resp. $N(\varepsilon)$) be the normal bundle of $\calo$ defined with respect to $\langle\cdot,\cdot\rangle$ (resp. collection of all $v\in N$ satisfying $\langle v,v\rangle<\varepsilon^2$). Applying \cite[Lemma 4.1]{wassermanequivariant}, we get: 
\begin{lemma}\label{morse-bott lemma equivariant without using local coordinates}
    There is a sufficiently small $\varepsilon>0$, a $G$-equivariant bundle map
    $$P: N\to N$$ being an orthogonal projection with respect to $\langle\cdot,\cdot\rangle$ on each fiber, and a $G$-equivariant embedding 
    $$\varrho: N(8\varepsilon)\to M$$ satisfying that $\varrho|_\calo = \text{\normalfont id}$, such that for all vectors $v\in N(8\varepsilon)$, 
    \begin{align}\label{morse bott expression local but can be extended to global}
    f\circ\varrho(v) = f(\calo)-\dfrac{1}{2}\left\langle Pv,Pv\right\rangle+\dfrac{1}{2}\left\langle(1-P)v,(1-P)v\right\rangle.
    \end{align}
\end{lemma}
In addition, given a point $p\in\calo$, we let $G_p$ be its stabilizer, and $N_p$ be the fiber of $N$ at $p$. If we view $G$ as a principal $G_p$-bundle over $G/G_p$, we get an associated vector bundle $G\times_{G_p} N_p$
over $G/G_p$. Since $G/G_p\cong \calo$, we have \cite[Theorem 1.25]{meinrenken}: 
\begin{lemma}\label{slice in our case}
    There is a $G$-equivariant bundle isomorphism $G\times_{G_p} N_p\cong N$. 
\end{lemma}
With Lemma \ref{morse-bott lemma equivariant without using local coordinates} and Lemma \ref{slice in our case}, we identify $N$ with $G\times_{G_p} N_p$. Now, we construct a Riemannian metric on $\llangle\cdot,\cdot\rrangle$ on $G\times_{G_p} N_p$ as in \cite[Corollary 1.27]{meinrenken} in the following steps:

\begin{enumerate}[label = (\arabic*)]
    \item We let $\mathbbm{1}$ be the identity of $G$. With respect to the adjoint representation of $G$ on $\mathfrak{g}$, we assign $\mathfrak{g}$ a $G$-invariant inner product $\langle\cdot,\cdot\rangle_{\mathfrak{g}}$. 
    \item Under $\langle\cdot,\cdot\rangle_{\mathfrak{g}}$, we let $U$ be the orthogonal complement of the Lie algebra of $G_p$ and choose an orthonormal basis $e_1,\cdots,e_n$ of $U$. 
    \item We let $v_1,\cdots,v_i,v_{i+1},\cdots,v_{m-n}$ be an orthonormal basis of $N_p$ with respect to the original bundle metric $\langle\cdot,\cdot\rangle$ such that $v_1,\cdots,v_{i}$ is the image of $P|_{N_p}$.
    \item At each equivalence class $[g,v]\in G\times_{G_p} N_p$, we let the following tangent vectors $$w_k[g,v] = \dfrac{d}{dt}\Big\vert_{t = 0}[g\exp(te_k),v],\ z_\ell[g,v] = \dfrac{d}{dt}\Big\vert_{t = 0}[g,v+tv_\ell]\ \ (1\leqslant k\leqslant n, 1\leqslant \ell \leqslant m-n).$$
    be an orthonormal basis of the tangent space $T_{[g,v]}(G\times_{G_p}N_p)$ with respect to $\llangle\cdot,\cdot\rrangle$. 
\end{enumerate}
Moreover, we extend this $\llangle\cdot,\cdot\rrangle$ to the whole manifold $M$ by using $G$-invariant bump functions so that the Riemannian metric equals $\llangle\cdot,\cdot\rrangle$ inside $N(4\varepsilon)$, while it is still $\langle\cdot,\cdot\rangle$ outside $N(6\varepsilon)$. 
\begin{proposition}
    This $\llangle\cdot,\cdot\rrangle$ is well-defined and $G$-invariant. 
\end{proposition}
\begin{proof}
    We know that $[gh,h^{-1}v] = [g,v]$ when $h\in G_p$. Then, we have $$w_k[g,v] = \dfrac{d}{dt}\Big\vert_{t = 0}[g\exp(t\adjoint_h e_k),v],\ z_\ell[g,v] = \dfrac{d}{dt}\Big\vert_{t = 0}[g,v+thv_\ell]\ \ (1\leqslant k\leqslant n, 1\leqslant \ell\leqslant m-n).$$
    Since both $\langle\cdot,\cdot\rangle$ and $\langle\cdot,\cdot\rangle_{\mathfrak{g}}$ are $G$-invariant, the choice of representatives for $[g,v]$ does not affect the metric $\llangle\cdot,\cdot\rrangle$.  
\end{proof}

\begin{remark}\normalfont
    As mentioned in \cite[Section 3]{banyagamorsehomologypaper}, in the Morse-Bott case, the transversality is not as generic as in the Morse case. Examples can be found in \cite[Section 2]{Latschev2000GradientFO}. However, since from the very beginning, we already assume that the original $\langle\cdot,\cdot\rangle$ satisfies the transversality conditions in \ref{morse bott orbit assumption 2}, by \cite[Appendix B]{Austin1995} and the continuous dependence on parameters of ODEs, the new metric $\llangle\cdot,\cdot\rrangle$ still satisfies \ref{morse bott orbit assumption 2} when $\varepsilon>0$ is sufficiently small. 
\end{remark}

Next, we describe the local frame of the adjusted metric.

\begin{notation}\label{new metric notation unchanged}\normalfont
    We will use the new metric on $M$ in the rest of this paper. For simplicity, we will still use the notation $\langle\cdot,\cdot\rangle$.
\end{notation} 

For any $g\in G$, we construct a local trivialization of $N$ around $[g,0]\in G\times_{G_p}\{0\} = \mathcal{O}$. By \cite[Theorem 9.3.7(iii)]{2011structureandgeometryofliegroups}, there is an open disk $\widetilde{U}\subset U$ centering at $0$ such that 
\begin{align*}
   \phi: \widetilde{U}\times G_p&\to G\\
    (e,h)&\to \exp(e)h
\end{align*}
is a diffeomorphism onto an open submanifold of $G$. 
Given any $g\in G$, we get a local trivialization
\begin{align*}
    \varphi_g: \widetilde{U}\times N_p&\to N\\
    (x_1e_1+\cdots+x_ne_n,y_1v_1\cdots+y_{m-n}v_{m-n})&\mapsto [g\exp(x_1e_1+\cdots+x_ne_n),y_1v_1\cdots+y_{m-n}v_{m-n}]
\end{align*}
and also a coordinate system $(\mathbf{x},
\mathbf{y}) = (x_1,\cdots,x_n,y_1,\cdots,y_{m-n})$ of $N$. By the compactness of $G$, finitely many $\varphi_g$'s cover the whole $N$. 

For $\mathbbm{1}\in G$ the identity element, we write $\varphi_{\mathbbm{1}}$ as $\varphi$ for convenience. 

Since $G$ can be nonabelian, $w_k\ (1\leqslant k\leqslant n)$ and $z_\ell\ (1\leqslant \ell\leqslant m-n)$ do not define a global frame of $TN$. However, we can still define a local frame. For example, on the local chart $\varphi$ of $N$ around $[\mathbbm{1},0]$, with the two projections
$$\pi_{\widetilde{U}}: \widetilde{U}\times G_p\to \widetilde{U}
\ \ \text{and}\ \ \pi_{G_p}: \widetilde{U}\times G_p\to G_p,$$
we write 
$u_k(t) = \exp(x_1e_1+\cdots+x_ne_n)\exp(te_k)$ and get a smooth local orthonormal frame
\begin{align}
\begin{split}
     w_k(\mathbf{x},\mathbf{y}) = \dfrac{d}{dt}\Big\vert_{t = 0}\Big(\pi_{\widetilde{U}}\circ\phi^{-1}(u_k(t)),\left(\pi_{G_p}\circ\phi^{-1}(u_k(t))\right)\cdot(y_1v_1+\cdots y_{m-n}v_{m-n})\Big) \\
    (1\leqslant k\leqslant n) \\
z_\ell(\mathbf{x},\mathbf{y}) 
= \dfrac{d}{dt}\Big\vert_{t = 0}\left(x_1e_1+\cdots+x_ne_n,y_1v_1+\cdots+(y_\ell+t)v_l+\cdots+y_{m-n}v_{m-n}\right) \\
(1\leqslant \ell\leqslant m-n). \label{local frame local expressions}
\end{split}
\end{align}
These $w_k$'s and $z_\ell$'s are orthonormal with respect to the new metric. Without loss of generality, we assume that they are oriented on $N$. Things are similar on any other $\varphi_g$ ($g\in G$). 

Now, on the chart $\varphi$, we use the local coordinate system $(\mathbf{x},\mathbf{y})$ and write
$$w_k(\mathbf{x},\mathbf{y}) = \sum_{j = 1}^n a_{kj}(\mathbf{x})\dfrac{\partial}{\partial x_j}+\sum_{s = 1}^{m-n}\left(\sum_{r = 1}^{m-n} b^r_{ks}(\mathbf{x}) y_r\right)\dfrac{\partial}{\partial y_s}\ \ \ \text{and}\ \ \ 
z_\ell(\mathbf{x},\mathbf{y}) = \dfrac{\partial}{\partial y_\ell}.$$
Here, $a_{kj}(\mathbf{x})$ and $b_{ks}^r(\mathbf{x})$ are functions in terms of $\mathbf{x}$. 

\begin{lemma}\label{skew symmetric b matrix}
    The functions $b_{ks}^r(\mathbf{x})$ satisfies that $$b_{ks}^r(\mathbf{x}) = -b_{kr}^s(\mathbf{x}),$$ and that 
    $b_{ks}^r(\mathbf{x}) = 0$ when $1\leqslant s\leqslant i$ and $i+1\leqslant t\leqslant m-n$. 
\end{lemma}
\begin{proof}
    We know that the Lie algebra of the orthogonal group consists of skew-symmetric matrices. Since in (\ref{local frame local expressions}), $\pi_{G_p}\circ\phi^{-1}(u_k(t))$ acts isometrically, we then have $$b_{ks}^r(\mathbf{x}) = -b_{kr}^s(\mathbf{x}).$$ The second conclusion is because the space $N_p^-$ (resp. $N_p^+$) generated by $v_1,\cdots,v_i$ (resp. by $v_{i+1},\cdots,v_{m-n}$) is invariant under the $G_p$-action on $N_p$.  
\end{proof}

Finally, we study two connections induced by the adjusted metric. 

Let $\nabla^{TN}$ be the associated Levi-Civita connection on $TN$. Using the pullback induced by the inclusion $\mathcal{O}\xhookrightarrow{} N$, we get a connection $\nabla^{{TN}|_{\mathcal{O}}}$ on the vector bundle $TN|_\mathcal{O}$. 
Then, by the projection $\pi: N\to \mathcal{O}$ and the bundle isomorphism
\begin{align*}
    \iota: TN & \to \pi^*(TN|_\mathcal{O})\\
    w_k[g,v]&\mapsto \left(w_k[g,0],[g,v]\right)\\
    z_\ell[g,v]&\mapsto \left(z_\ell[g,0],[g,v]\right)
\end{align*}
we get a connection $\nabla$ on $TN$ defined by 
$$\iota\circ \nabla_XY = \left(\pi^*\nabla^{TN|_\mathcal{O}}\right)_X(\iota\circ Y)$$
for any $X, Y\in\mathfrak{X}(N)$.
Here, the notations of elements in the pullback bundle follow $$\pi^*(TN|_\mathcal{O})\subseteq TN|_\mathcal{O}\times \mathcal{O}.$$ 
Let $a^{rs}$ be the function such that the matrix $\left[a^{rs}\right]_{1\leqslant r,s\leqslant n}$ is the inverse of $\left[a_{rs}\right]_{1\leqslant r,s\leqslant n}$. 
As in \cite[Lemma 2.2]{wenlumorsebottineq}, we can compute the difference between $\nabla^{TN}$ and $\nabla$. For simplicity, we define the following notations 
\begin{align*}
    \mathcal{A}_{jk\ell}(\mathbf{x}) \coloneqq & \dfrac{1}{2}\sum_{r,s = 1}^{n}\left(a_{j r}(\mathbf{x})\dfrac{\partial a_{\ell s}(\mathbf{x})}{\partial x_r}-a_{\ell r}(\mathbf{x})\dfrac{\partial a_{j s}(\mathbf{x})}{\partial x_r}\right)a^{sk}(\mathbf{x}), \\
    \mathcal{B}_{jk}^{\ell t}(\mathbf{x}) \coloneqq\ & \dfrac{1}{2}\sum_{r = 1}^n\sum_{s = 1}^{m-n}\left(a_{jr}(\mathbf{x})\dfrac{\partial b^t_{k\ell}(\mathbf{x})}{\partial x_r}-a_{kr}(\mathbf{x})\dfrac{\partial b^t_{j\ell}(\mathbf{x})}{\partial x_r} + b^t_{js}(\mathbf{x}) b^s_{k\ell}(\mathbf{x})-b^t_{ks}(\mathbf{x}) b^s_{j\ell}(\mathbf{x})\right). 
\end{align*}
The precise computation is as follows. 
\begin{lemma}\label{local christoffel symbols}
Using the local frame $w_1,\cdots,w_n,z_1,\cdots,z_{m-n}$, we find
\begin{align*}
    \nabla^{TN}_{w_j}w_k =\ &\sum_{\ell = 1}^n \left(-\mathcal{A}_{jk\ell}(\mathbf{x}) - \mathcal{A}_{k\ell j}(\mathbf{x}) + \mathcal{A}_{\ell jk}(\mathbf{x})\right)w_\ell + \sum_{\ell, t = 1}^{m-n}\left(\mathcal{B}_{jk}^{\ell t}(\mathbf{x})+\sum_{q = 1}^n\mathcal{A}_{kqj}(\mathbf{x})b_{q\ell}^t(\mathbf{x})\right)y_t z_\ell \\
    =\ & \nabla_{w_j}w_k + \sum_{\ell, t = 1}^{m-n}\left(\mathcal{B}_{jk}^{\ell t}(\mathbf{x})+\sum_{q = 1}^n\mathcal{A}_{kqj}(\mathbf{x})b_{q\ell}^t(\mathbf{x})\right)y_t z_\ell\ ,\\
    \nabla^{TN}_{w_j}z_k =\ & \sum_{\ell = 1}^{m-n} b_{jk}^\ell(\mathbf{x})z_\ell + \sum_{\ell = 1}^n\sum_{t = 1}^{m-n}\left(\mathcal{B}_{\ell j}^{kt}(\mathbf{x})+\sum_{q = 1}^n \mathcal{A}_{jq\ell}(\mathbf{x})b_{qk}^t(\mathbf{x})\right)y_t w_\ell\\
    =\ & \nabla_{w_j}z_\ell + \sum_{\ell = 1}^n\sum_{t = 1}^{m-n}\left(\mathcal{B}_{\ell j}^{kt}(\mathbf{x})+\sum_{q = 1}^n \mathcal{A}_{jq\ell}(\mathbf{x})b_{qk}^t(\mathbf{x})\right)y_t w_\ell\ ,\\
    \nabla^{TN}_{z_j} w_k =\ & 0 + \sum_{\ell = 1}^n\sum_{t = 1}^{m-n} \mathcal{B}_{\ell k}^{jt}(\mathbf{x})y_t w_\ell = \nabla_{z_j}w_k + \sum_{\ell = 1}^n\sum_{t = 1}^{m-n} \mathcal{B}_{\ell k}^{jt}(\mathbf{x})y_t w_\ell\ ,\\
    \nabla^{TN}_{z_j}z_k =\ & \nabla_{z_j}z_k = 0
\end{align*}
on the local chart of $N$ given by $\varphi$. 
\end{lemma}
\begin{proof}
    This is a direct computation using Koszul's formula \cite[Corollary 5.11(a)]{leeriemannian}. 
\end{proof}

\section{Spectral gap of the Witten Laplacian}\label{section of spectral gaps}
In this section, we figure out the spectral gap of $D_T$ when $T$ is sufficiently large. This spectral gap distinguishes between horizontal and vertical eigenvalues. 

First of all, we clarify the behavior of $D_T$ on $G$-invariant forms around each critical orbit.
\begin{definition}\normalfont
    On $\Omega^*(N)$, we call the operators
    $$D^{H} = \sum_{k = 1}^n c(w_k)\nabla_{w_k}$$
    and 
    $$D^{V} = \sum_{\ell = 1}^n c(z_\ell)\nabla_{z_\ell}$$
    the horizontal operator and the vertical operator respectively. 
    \end{definition}
    
    We see that $D^H$ and $D^V$ are independent of local orthonormal frames. Thus, they are globally defined on $N$. 
    As in \cite[(9.69)]{bismutandlebeau}, their supercommutator vanishes on $\Omega^*(N)$. 
\begin{proposition}\label{commutator}
    For any $\eta\in\Omega^*(N)$, we have $\left[D^H, D^V\right]\eta = 0$. 
\end{proposition}
\begin{proof}
    The proof follows from Lemma \ref{skew symmetric b matrix}, Lemma \ref{local christoffel symbols}, and the fact that $$c(Y)\nabla_X = \nabla_X c(Y) + c(\nabla_X Y)$$
    for any $X,Y\in\mathfrak{X}(N)$. 
\end{proof}
    
    Moreover, we notice that $\nabla$ is a metric connection. Let $\star$ be the Hodge star, and $dV = \star 1$ be the volume form on $N$. As in \cite[VIII(h)]{bismutandlebeau} and \cite[Section 2.4]{wenlumorsebottineq}, $D^H$ and $D^V$ are formal self-adjoint operators. 
\begin{proposition}\label{self adjoint D^H and D^V verifications}
    The operators $D^H$ and $D^V$ are formal self-adjoint on $\Omega^*(N)$. 
\end{proposition}
\begin{proof}
   For any $\omega, \eta\in\Omega^*(N)$ with one of them being compactly supported, 
    \begin{align*}
         & \int_N \langle D^H\omega,\eta\rangle dV \\
        = & \int_N \sum_{k = 1}^n\langle\omega,\nabla_{w_k}(c(w_k)\eta)\rangle dV - \int_N\sum_{k = 1}^n w_k\langle\omega,c(w_k)\eta\rangle dV\\
        = & \int_N \sum_{k = 1}^n\langle\omega,c(w_k)\nabla_{w_k}\eta\rangle dV + \int_N \sum_{k = 1}^n \langle\omega,c\left(\nabla_{w_k} w_k\right)\eta\rangle dV - \int_N\sum_{k = 1}^n d\langle\omega,c(w_k)\eta\rangle\wedge\star w_k^*\\
        = & \int_N \sum_{k = 1}^n\langle\omega,D^H\eta\rangle dV + \int_N \sum_{k = 1}^n \langle\omega,c\left(\nabla_{w_k} w_k\right)\eta\rangle dV\\
        & - \int_N d\left(\sum_{k = 1}^n\langle\omega,c(w_k)\eta\rangle\wedge\star w_k^*\right) - \int_N\sum_{k = 1}^n\langle\omega,c(w_k)\eta\rangle\wedge\star d^*w_k^*\\
        = & \int_N \sum_{k = 1}^n\langle\omega,D^H\eta\rangle dV + \int_N \sum_{k = 1}^n \langle\omega,c\left(\nabla_{w_k} w_k\right)\eta\rangle dV\\
        & + \int_N\sum_{k = 1}^n\sum_{\ell = 1}^n\left\langle\langle\omega,c(w_k)\eta\rangle, w_\ell\lrcorner \nabla^{TN}_{w_\ell}w_k^*\right\rangle dV+\int_N\sum_{k = 1}^n\sum_{r = 1}^{m-n}\left\langle\langle\omega,c(w_k)\eta\rangle, z_r\lrcorner \nabla^{TN}_{z_r}w_k^*\right\rangle dV\\ 
        = & \int_N \sum_{k = 1}^n\langle\omega,D^H\eta\rangle dV + \int_N \sum_{k = 1}^n \langle\omega,c\left(\nabla_{w_k} w_k\right)\eta\rangle dV - \int_N\sum_{\ell = 1}^n\sum_{k = 1}^n\left\langle\langle\omega,c(w_k)\eta\rangle, w_k^*\left(\nabla_{w_\ell}w_\ell\right)\right\rangle dV\\
        = & \int_N \sum_{k = 1}^n\langle\omega,D^H\eta\rangle dV.
    \end{align*}
    The second to last equal sign is by Lemma \ref{local christoffel symbols}. The proof for $D^V$ is similar. 
\end{proof}

Since both $\nabla^{TN}$ and $\nabla$ are $G$-invariant connections (See \cite[(1.10)]{bgv}), we can also view $D_T$, $D^H$, and $D^V$ as operators on $\Omega^*(N)^G$. Recall that by \cite[(4.16)]{wittendeformationweipingzhang}, we have
\begin{align}\label{bochner type formula of D_T using o.n. frames}
    D_{T} = \sum_{k = 1}^n c(w_k)\nabla^{TN}_{w_k}+\sum_{\ell = 1}^{m-n}c(z_\ell)\nabla^{TN}_{z_\ell} + T\hat{c}(df)
\end{align}
on the local chart $\varphi$. Now, we hope to see how far away $D_T$ is from $D^H+D^V+T\hat{c}(df)$. 

Before we study the difference between $D_T$ and $D^H+D^V+T\hat{c}(df)$, we give two lemmata about the $G$-invariant forms on $N$. Recall that $n = \dim\mathcal{O}$, and $i$ is the Morse index of $\mathcal{O}$. 

\begin{notation}\normalfont
    We let $N_p^-\subseteq N_p$ (resp. $N^-\subseteq N$) be the subspace (resp. subbundle) generated by $v_1,\cdots,v_i$ (resp. equal to $G\times_{G_p} N_p^-$). Meanwhile, 
    we denote by $o(N^-)$ the orientation bundle of $N^-$. It identifies with the orientation bundle $\mathcal{E}_i\vert_{\mathcal{O}}$. 
\end{notation}

\begin{lemma}\label{local orientation valued form on critical orbits}
    For any $\omega\in\Omega^j(\mathcal{O}, o(N^-))^G$, on the local chart
    $\varphi: \widetilde{V}\times\{0\}\to \mathcal{O}$, we have
    $$\omega = \sum_{k_1<\cdots<k_j}c_{k_1\cdots k_j} w_{k_1}^*\wedge\cdots\wedge w_{k_j}^*\otimes e^-,$$
    where each $c_{k_1\cdots k_j}$ is a constant, $w_1^*,\cdots, w_n^*$ are restricted on $\mathcal{O}$, and $e^-$ is the ``one-section'' of $o(N^-)$ with respect to the local trivialization 
    $\varphi: \widetilde{V}\times N_p^-\to N^-$. 
\end{lemma}
\begin{proof}
    Locally, we always have $$\omega = \sum_{k_1<\cdots<k_j}c_{k_1\cdots k_j} {w}_{k_1}^*\wedge\cdots\wedge {w}_{k_j}^*\otimes e^-$$
    and only need to show $c_{k_1\cdots k_j}(\mathbf{x}) = c_{k_1\cdots k_j}(0)$.
    
    In fact, for any $u = x_1e_1+\cdots+x_ne_n\in\widetilde{V}$, 
    \begin{align*}
    &\ \sum_{k_1<\cdots<k_j}c_{k_1\cdots k_j}\left([\exp(u), 0]\right) {w}^*_{k_1[\exp(u), 0]}\wedge\cdots\wedge {w}^*_{k_j[\exp(u), 0]}\otimes (e^-)_{[\exp(u), 0]} \\
    = &\ \omega_{[\exp(u), 0]}\\
    = &\ (\exp(u)\cdot \omega)_{[\exp(u),0]}\ \ \text{(This is because $\omega$ is $G$-invariant)}\\
    = &\ \exp(u)\cdot \left(\omega_{[\mathbbm{1},0]}\right)\\
    = &\ \sum_{k_1<\cdots<k_j}c_{k_1\cdots k_j}\left([\mathbbm{1}, 0]\right) \exp(u)\cdot \left({w}^*_{k_1[\mathbbm{1}, 0]}\wedge\cdots\wedge {w}^*_{k_j[\mathbbm{1}, 0]}\otimes (e^-)_{[\mathbbm{1}, 0]} \right)\\
    = &\ \sum_{k_1<\cdots<k_j}c_{k_1\cdots k_j}\left([\mathbbm{1}, 0]\right) {w}^*_{k_1[\exp(u), 0]}\wedge\cdots\wedge {w}^*_{k_j[\exp(u), 0]}\otimes (e^-)_{[\exp(u), 0]}.
    \end{align*}
    Thus, each $c_{k_1\cdots k_j}$ is a constant. 
\end{proof}
Lemma \ref{local orientation valued form on critical orbits} shows that $\displaystyle\dim\Omega^j(\mathcal{O},o(N^-))^G\leqslant\binom{n}{j}$. 
\begin{corollary}
    The space $C^*(M,f)^G$ is finite dimensional.
\end{corollary}

By a similar procedure, we get a similar lemma on $N$. 
\begin{lemma}\label{invariant forms locally looks like}
    For any $\eta\in \Omega^*(N)^G$, on the local chart $\varphi: \widetilde{V}\times N_p\to N$, it has the form 
    $$\eta = \sum_{\substack{k_1<\cdots<k_r\\ \ell_1<\cdots<\ell_t}}C_{k_1\cdots k_r}^{\ell_1\cdots \ell_t}(\mathbf{y}) w_{k_1}^*\wedge\cdots\wedge w_{k_r}^*\wedge z_{\ell_1}^*\wedge\cdots\wedge z_{\ell_t}^*,$$
    where each $C_{k_1\cdots k_r}^{\ell_1\cdots \ell_t}(\mathbf{y})$ is a function in terms of only the vertical coordinate $\mathbf{y}$. 
\end{lemma}

\begin{notation}\normalfont
    For any $\eta,\eta'\in\Omega^*(N)$ (resp. on $\Omega^*(M)$), we let $\abs{\eta} \coloneqq \langle\eta,\eta\rangle^{1/2}$, $\|\eta\|$ be the $L^2$-norm of $\eta$, and $(\eta,\eta')$ be the integral of $\eta\wedge\star\eta'$ on $N$ (resp. on $M$). In particular, we write $|\mathbf{y}|^2 = y_1^2+\cdots+y_{m-n}^2$. 
\end{notation} 

In addition, for the calculation on $N$, we need an extension of $f$ to resolve the issue that Lemma \ref{morse-bott lemma equivariant without using local coordinates} holds true only in a bounded neighborhood.  
\begin{notation}\normalfont
    Without loss of generality, when studying $D_T$ on $N(4\varepsilon)$, we view $f$ as a function given by (\ref{morse bott expression local but can be extended to global}) on the whole $N$ if necessary. In this way, we extend $D_T$ onto $\Omega^*(N)$. 
\end{notation}

As in \cite[Theorem 8.18]{bismutandlebeau} and \cite[Theorem 2.5]{wenlumorsebottineq}, on 
 $\Omega^*(N)^G$, we can now write $D_T$ into the sum ``horizontal'' $+$ ``vertical'' $+$ ``tail'' and describe how the ``tail'' acts on forms. 
\begin{proposition}
We let $R$ be the operator $$D_T - D^H - D^V - T\hat{c}(df).$$ Then, we have a constant $\Gamma_N>0$ such that  
   \begin{align}\label{estimate of tail R}
       \abs{R\eta} \leqslant \Gamma_N\cdot \abs{\mathbf{y}}\cdot \abs{\eta}
   \end{align}
    for all $\eta\in\Omega^*(N)^G$. Also, $R$ can be viewed as a matrix of order $O(\abs{\mathbf{y}})$ on the local chart $\varphi$. 
\end{proposition}
\begin{proof}
    By (\ref{bochner type formula of D_T using o.n. frames}), we find $$R = \sum_{k = 1}^n c(w_k)\left(\nabla^{TN}_{w_k} - \nabla_{w_k}\right) +  \sum_{\ell = 1}^{m-n} c(z_\ell)\left(\nabla^{TN}_{z_\ell} - \nabla_{z_\ell}\right).$$ 
    The estimate (\ref{estimate of tail R}) follows from Lemma \ref{local christoffel symbols} and Lemma \ref{invariant forms locally looks like}. The matrix form of $R$ on the chart $\varphi$ is by letting $\mathbf{x} = 0$ in Lemma \ref{local christoffel symbols} according to Lemma \ref{invariant forms locally looks like}. 
\end{proof}

Now, we construct a space generated by ``approximate'' eigenforms of $D_T^2$. 

Let $\rho$ be a $G$-invariant bump function on $M$ such that $\rho \equiv 1$ on each $N(\varepsilon)$, and $\rho \equiv 0$ outside the union of all $N(2\varepsilon)$. 
With an unambiguous abuse of notations, we define  
\begin{align}\label{definition of J_T will be used later}
    J_T: \Omega^*(\mathcal{O},o(N^-))^G&\to \Omega^{*+i}(M)^G \nonumber\\
    \omega&\mapsto \rho \cdot \omega\wedge\exp\left(-\dfrac{T}{2}|\mathbf{y}|^2\right)z_1^*\wedge\cdots\wedge z_i^*.
\end{align}
The image of $J_T$ is globally defined because of the local coefficient $o(N^-)$.

\begin{notation}\normalfont
    As in \cite[(5.18)]{wittendeformationweipingzhang}, we let $E_{T}$ be the space spanned by all such $J_T\omega$ $\left(\omega\in C^*(M,f)^G \right)$, and $E_{T}^\perp$ be the $L^2$-orthogonal complement of $E_{T}$ in $\Omega^*(M)^G$. Meanwhile, we let $p_T$ and $p_T^\perp$ be the orthogonal projection from $\Omega^*(M)^G$ to $E_T$ and $E_T^\perp$ respectively. 
\end{notation}

We have $\nabla (z_1^*\wedge\cdots\wedge z_i^*) = 0$ by Lemma \ref{skew symmetric b matrix} and Lemma \ref{local christoffel symbols}. Then, we get a twisted
\begin{align}\label{definition of the twisted d plus d star rigorous definition here}
    d+d^*: \Omega^j(\mathcal{O},o(N^-))^G &\to \Omega^{j\pm 1}(\mathcal{O},o(N^-))^G
\end{align}
on each critical orbit $\mathcal{O}$ given by $J_T^{-1}\circ D^H\circ J_T$.
In addition, using the local chart $\varphi$ and the notations in \ref{local orientation valued form on critical orbits}, we get an inner product on $C^*(M,f)$ given by the magnitude
\begin{align}\label{inner product local expression on each critical orbit making d+d^* self adjoint}
\abs{\omega} \coloneqq \sum_{k_1<\cdots<k_j}c_{k_1\cdots k_j}^2, \ \forall\ \omega\in\Omega^j(\mathcal{O},o(N^-)).
\end{align}

\begin{proposition}\label{twisted dirac type on critical orbits}
 On $C^*(M,f)^G$, $d+d^*$ is self-adjoint under the inner product {\normalfont(\ref{inner product local expression on each critical orbit making d+d^* self adjoint})}. 
\end{proposition}
\begin{proof}
    This is because $D^H$ is self-adjoint on $E_T$ by Proposition \ref{self adjoint D^H and D^V verifications}. 
\end{proof}

Recall that $\alpha_0$ is the spectral radius of $(d+d^*)^2$ on $C^*(M,f)^G$. 
\begin{proposition}\label{DT1 estimate nonzero}
    There exist $C_0, C_1, T_0 > 0$ and $\Gamma = \max_N\{\Gamma_N\}$ such that 
    $$\|D_{T}\eta\|\leqslant \left(\sqrt{\alpha_0} + C_0e^{-C_1T}+\Gamma\varepsilon\right)\|\eta\|$$
    for all $T>T_0$ and all $\eta\in E_{T}$. 
\end{proposition}
\begin{proof}
    For any $\omega\in\Omega^j(\mathcal{O},o(N^-))^G$, applying $D_T = D^H+D^V+T\hat{c}(df)+R$ to it,  we find constants $C_0, C_1, T_0 > 0$ such that when $T>T_0$, 
    \begin{align*}
       &\|D_{T}J_T\omega\| \\
       \leqslant \ & \left\|((d+d^*)\omega)\wedge\rho\exp\left(-\dfrac{T}{2}|\mathbf{y}|^2\right)z_1^*\wedge\cdots\wedge z_i^*\right\| + \left\|c(d\rho) \omega\wedge\exp\left(-\dfrac{T}{2}|\mathbf{y}|^2\right)z_1^*\wedge\cdots\wedge z_i^*\right\|\\
       & + \left\|RJ_T\omega\right\|\\ 
       \leqslant \ & \sqrt{\alpha_0} \|J_T\omega\|+C_0e^{-C_1T}\|J_T\omega\|+\Gamma_N\varepsilon\|J_T\omega\|.
    \end{align*}
    The last inequality is \text{by Proposition \ref{twisted dirac type on critical orbits} and (\ref{estimate of tail R})}.
\end{proof}

\begin{remark}\label{remark of small epsilon for adjustment}
\normalfont
    The $\Gamma$ in Proposition \ref{DT1 estimate nonzero} is independent of $\varepsilon$, meaning that whenever $\alpha>\alpha_0$, we can choose small $\varepsilon$ and assume that $\sqrt{\alpha_0}<\sqrt{\alpha_0} + C_0e^{-C_1T}+\Gamma\varepsilon<\sqrt{\alpha}$. 
\end{remark}

With the local expression given by Lemma \ref{invariant forms locally looks like} for every $\eta\in\Omega^*(N)^G$, we define
$$\mathcal{H}_T \coloneqq -\sum_{j = 1}^{m-n}\dfrac{\partial^2}{\partial y_j^2} - (m-n)T + T^2|\mathbf{y}|^2\ \  \text{and}\ \ \mathcal{L}_T \coloneqq 2T\displaystyle\sum_{l = 1}^{i}z_l\lrcorner z_l^*\wedge+2T\sum_{l = i+1}^{m-n}z_l^*\wedge z_l\lrcorner.$$
According to \cite[Proposition 4.6]{wittendeformationweipingzhang} and Lemma \ref{invariant forms locally looks like}, we find on the local chart $\varphi$ that
\begin{align} \label{this is how the vertical laplacian works}
      & (D^V+T\hat{c}(df))^2\eta \nonumber\\
   =\ & \sum_{\substack{k_1<\cdots<k_r\\ \ell_1<\cdots<\ell_t}} \mathcal{H}_T\left(C_{k_1\cdots k_r}^{\ell_1\cdots \ell_t}(\mathbf{y})\right)w_{k_1}^*\wedge\cdots\wedge w_{k_r}^*\wedge z_{\ell_1}^*\wedge\cdots\wedge z_{\ell_t}^* \nonumber\\
   & +  \sum_{\substack{k_1<\cdots<k_r\\ \ell_1<\cdots<\ell_t}} C_{k_1\cdots k_r}^{\ell_1\cdots \ell_t}(\mathbf{y})  \mathcal{L}_T\left(w_{k_1}^*\wedge\cdots\wedge w_{k_r}^*\wedge z_{\ell_1}^*\wedge\cdots\wedge z_{\ell_t}^*\right). 
\end{align}

\begin{lemma}\label{lemma of harmonic oscillators and linear parts on forms fiberwisely}
    When restricted to the space of $L^2$-sections of $\Lambda^*T^*N\vert_{N_p}$, the kernel of the positive operator $\mathcal{H}_T+\mathcal{L}_T$ is spanned by 
    $$\exp\left(-\dfrac{T}{2}\abs{\mathbf{y}}^2\right)w^*_{k_1}\wedge\cdots\wedge w^*_{k_j} \wedge z_1^*\wedge\cdots\wedge z_i^*\ \ (1\leqslant k_1<\cdots <k_j\leqslant n),$$ where each $w_k^*$ is restricted to $N_p$. 
    Moreover, its first nonzero eigenvalue is $\geqslant 2T$. 
\end{lemma}
\begin{proof}
    The $\mathcal{H}_T$ is the harmonic oscillator on the space of $L^2$-functions. By \cite[Section 8.6 (6.12)]{taylor2023partialvol2}, we see that on $L^2$-functions, the kernel of $\mathcal{H}_T$ is generated by $\exp\left(-\dfrac{T}{2}\abs{\mathbf{y}}^2\right)$, and all nonzero eigenvalues are $\geqslant 2T$. 
    
    For $\mathcal{L}_T$, on the space of frames with constant coefficients, its kernel is generated by
    $$w^*_{k_1}\wedge\cdots\wedge w^*_{k_j} \wedge z_1^*\wedge\cdots\wedge z_i^*\ \ (1\leqslant k_1<\cdots <k_j\leqslant n),$$
    while all nonzero eigenvalues are $\geqslant 2T$.
\end{proof}

Now, on the space $\Omega^*(N)^G$, we let $p_T'$ be the orthogonal projection from $\Omega^*(N)^G$ to the subspace $E_T'$ spanned by all
$$\omega\wedge\exp\left(-\dfrac{T}{2}|\mathbf{y}|^2\right) z_1^*\wedge\cdots\wedge z_{i}^*\hspace{+1mm},\forall\ \omega\in \Omega^*(\mathcal{O},o(N^-))^G.$$
This $E_T'$ is exactly the kernel of $\mathcal{H}_T+\mathcal{L}_T$ on $\Omega^*(N)^G$. In addition, on the chart $\varphi$, with the notations from Lemma \ref{invariant forms locally looks like}, we find
\begin{align}\label{definition of p_T' using density}
 p_T'\eta =\ & \left(\dfrac{T}{\pi}\right)^{\frac{m-n}{2}}\exp\left(-\dfrac{T}{2}|\mathbf{y}|^2\right)\cdot \nonumber\\
  & \sum_{\substack{k_1<\cdots<k_r}}\left(\int_{\mathbb{R}^{m-n}}C_{k_1\cdots k_r}^{1\cdots i}(\mathbf{y})\exp\left(-\dfrac{T}{2}|\mathbf{y}|^2\right)\left|d\mathbf{y}\right|\right) w_{k_1}^*\wedge\cdots\wedge w_{k_r}^*\wedge z_1^*\wedge\cdots\wedge z_i^*,
\end{align}
meaning that $p_T'$ is largely determined by the projection from $L^2(\mathbb{R}^{m-n})$ to the kernel of the harmonic oscillator (compare with \cite[(8.91)]{bismutandlebeau}). 

For the convenience of estimating $D_T$ on $E_T^\perp$, we present an auxiliary estimate of $p_T'$. 
\begin{lemma}
    There is a constant $C'>0$ such that when $T$ is sufficiently large, 
    \begin{align}\label{estimate of p_T' beforehand}
    \left\|p'_T\eta\right\| \leqslant C'T^{-1/4}\|\eta\|
    \end{align}
    for all $\eta\in\Omega^*(N)^G\cap E_T^\perp$ with $\supp(\eta)\subseteq N(4\varepsilon)$. 
\end{lemma}
\begin{proof}
    For any such $\eta$, as in \cite[(9.80)]{bismutandlebeau}, we can rewrite (\ref{definition of p_T' using density}) into 
    \begin{align*}
     &\  p_T'\eta \\
 =\ & \left(\dfrac{T}{\pi}\right)^{\frac{m-n}{2}}\exp\left(-\dfrac{T}{2}|\mathbf{y}|^2\right)\cdot \nonumber\\
  & \sum_{\substack{k_1<\cdots<k_r}}\left(\int_{\mathbb{R}^{m-n}}(1-\rho(\mathbf{y}))C_{k_1\cdots k_r}^{1\cdots i}(\mathbf{y})\exp\left(-\dfrac{T}{2}|\mathbf{y}|^2\right)\left|d\mathbf{y}\right|\right) w_{k_1}^*\wedge\cdots\wedge w_{k_r}^*\wedge z_1^*\wedge\cdots\wedge z_i^*.
\end{align*}
Then, (\ref{estimate of p_T' beforehand}) is because 
$\left|(1-\rho(\mathbf{y}))C_{k_1\cdots k_r}^{1\cdots i}(\mathbf{y})\exp\left(-\dfrac{T}{2}|\mathbf{y}|^2\right)\right|\leqslant \left|C_{k_1\cdots k_r}^{1\cdots i}(\mathbf{y})\exp\left(-T\varepsilon^2/2\right)\right|$. 
\end{proof}
Immediately, we state the estimate of $D_T$ on $E_T^\perp$ as follows. 
\begin{proposition}\label{big DT4 estimate}
    There exist $C_2> 0$ and $T_1>0$ such that 
    $$\|D_{T}\eta\|\geqslant C_2\sqrt{T}\|\eta\|$$
    for all $T>T_1$ and all $\eta\in E_T^\perp$. 
\end{proposition}
\begin{proof}
We prove it in three cases as \cite[Theorem 9.11]{bismutandlebeau} and \cite[Proposition 4.12]{wittendeformationweipingzhang}.

\vspace{+1mm}
\noindent\textbf{Case I}. We first assume that $\eta$ is supported in one $N(4\varepsilon)$. By (\ref{estimate of tail R}) and Proposition \ref{commutator}, 
    \begin{align}\label{estimate DT4 1}
        \|D_{T}\eta\|^2
        \geqslant\ & \dfrac{1}{2}\left\|D^V\eta+T\hat{c}(df)\eta + D^H\eta\right\|^2- \left\|R\eta\right\|^2 \nonumber\\
        \geqslant\ & \dfrac{1}{2}\left\|D^V\eta+T\hat{c}(df)\eta\right\|^2 + \dfrac{1}{2}\left\|D^H\eta\right\|^2- \Gamma_N^2\cdot (4\varepsilon)^2\cdot \left\|\eta\right\|^2
    \end{align}
    By Lemma \ref{invariant forms locally looks like}, we write $\eta$ into an orthogonal decomposition $\eta_1+\eta_2$, with $\eta_1$ carrying $z_1^*\wedge\cdots\wedge z_i^*$, while $\eta_2$ carrying other combinations of $z_j^*$'s.  
    By (\ref{this is how the vertical laplacian works}), Lemma \ref{lemma of harmonic oscillators and linear parts on forms fiberwisely}, (\ref{definition of p_T' using density}), and (\ref{estimate of p_T' beforehand}), when $T$ is sufficiently large, we have
    \begin{align}\label{estimate DT4 2}
     & \|D^V\eta+T\hat{c}(df)\eta\|^2 \nonumber\\
        =\ & \left(\mathcal{H}_T\eta, \eta\right) + \left(\mathcal{L}_T\eta, \eta\right) \nonumber\\
       \geqslant\ & (\mathcal{H}_T\eta_1,\eta_1) + (\mathcal{L}_T\eta_2,\eta_2) \nonumber\\
         \geqslant\ & 2T\|\eta_1-p_T'\eta_1\|^2 + 2T\left\|\eta_2\right\|^2 \nonumber\\
         \geqslant\  & 2T\|\eta_1\|^2 - 2C'T^{1/2}\|\eta_1\|^2 + 2T\|\eta_2\|^2 \nonumber\\
         \geqslant\ & 2T\|\eta\|^2 - 2C'T^{1/2}\|\eta\|^2
    \end{align}
    The same estimate holds when $\eta$ is supported in the union of all $N(4\varepsilon)$'s. 

    \vspace{+1mm}
    \noindent\textbf{Case II}. Next, we assume that $\eta$ is supported outside the union of all $N(2\varepsilon)$'s. Let $D \coloneqq d+d^*$ on $M$. Then, there is a constant $C''>0$ greater than the norm of the supercommutator $[D,\hat{c}(df)]$ on $\Omega^*(M)^G$. In addition, since there is another constant $C'''>0$ such that $$|\nabla f|^2\geqslant C'''$$
    outside the union of all $N(2\varepsilon)$'s, we obtain
    \begin{align}\label{estimate DT4 3}
        \left\|D_T\eta\right\|^2
        \geqslant T\left([D,\hat{c}(df)]\eta, \eta\right) + C'''T^2\|\eta\|^2 
        \geqslant (C'''T^2-C''T)\|\eta\|^2. 
    \end{align}
    \noindent\textbf{Case III}. Finally, for general $\eta\in E_T^\perp$, we unify the above estimates in exactly the same way as \cite[Proposition 4.12 Step 3]{wittendeformationweipingzhang}. 
    Let $\tilde{\rho}$ be the function on $M$ satisfying $\tilde{\rho}(\mathbf{y}) = \rho(\mathbf{y}/2)$ on each $N(4\varepsilon)$ and $\tilde{\rho} = 0$ outside the union of all $N(4\varepsilon)$'s. Then, we have
    \begin{align*}
        \|D_T\eta\|
        \geqslant\ &\dfrac{1}{\sqrt{2}}\|(1-\tilde{\rho})D_T\eta\| + \dfrac{1}{\sqrt{2}}\|\tilde{\rho}D_T\eta\|\\
        =\ & \dfrac{1}{\sqrt{2}}\|D_T((1-\tilde{\rho})\eta)+[D,\tilde{\rho}]\eta\| + \dfrac{1}{\sqrt{2}}\|D_T(\tilde{\rho}\eta)+[\tilde{\rho},D]\eta\|\\
        \geqslant\ & \dfrac{1}{\sqrt{2}}\|D_T((1-\tilde{\rho})\eta)\|  + \dfrac{1}{\sqrt{2}}\|D_T(\tilde{\rho}\eta)\| - \dfrac{1}{\sqrt{2}}\|[D,\tilde{\rho}]\eta\| - \dfrac{1}{\sqrt{2}}\|[\tilde{\rho},D]\eta\|. 
    \end{align*}
    Since $\tilde{\rho}\eta$ and $(1-\tilde{\rho})\eta$ are both in $E_T^\perp$, we apply (\ref{estimate DT4 1}) and (\ref{estimate DT4 2}) to $\|D_T(\tilde{\rho}\eta)\|$, (\ref{estimate DT4 3}) to $\|D_T((1-\tilde{\rho})\eta)\|$, and bounded operator norms to $\|[D,\tilde{\rho}]\eta\|$ and $\|[\tilde{\rho},D]\eta\|$.
\end{proof}

Finally, using Propositions \ref{DT1 estimate nonzero} and \ref{big DT4 estimate}, we present the spectral gap of $D_T^2$. The proof is adapted from the more general
    \cite[Lemma 5.3]{weipingzhangnewedition}, effective also for essential spectrum. 

\begin{proposition}\label{spectral gap of the Witten Laplacian proposition}
    When $T>0$ is sufficiently large, all eigenvalues of $D_{T}^2$ on $\Omega^*(M)^G$ are in $\left[0, (\sqrt{\alpha_0} + C_0 e^{-C_1T} + \Gamma\varepsilon)^2\right]\cup \left[C_2^2T,+\infty\right)$. 
\end{proposition}
\begin{proof}
    Suppose that $\omega\in\Omega^*(M)^G$ and $$\left(\sqrt{\alpha_0} + C_0 e^{-C_1T} + \Gamma\varepsilon\right)^2<\lambda< C_2^2T$$ satisfy
    $D_{T}^2\omega = \lambda\omega$. 
    Then, we write $\omega = \omega_1+\omega_2$ with $\omega_1\in E_T$ and $\omega_2\in E_T^\perp$. Since $D_{T}$ is self-adjoint, by the above two results, we get 
    \begin{align*}
        0 =\ & \left\langle (D_{T}^2-\lambda)\omega, \omega_1-\omega_2 \right\rangle \\
        =\ & \left\langle (D_{T}^2-\lambda)\omega_1,\omega_1 \right\rangle - \left\langle (D_{T}^2-\lambda)\omega_2,\omega_2 \right\rangle\\
        \leqslant\ & \left((\sqrt{\alpha_0} + C_0 e^{-C_1T} + \Gamma\varepsilon)^2-\lambda\right)\|\omega_1\|^2 + (\lambda - C_2^2T) \|\omega_2\|^2 \leqslant 0.
    \end{align*}
    This shows that $\omega = 0$. 
\end{proof}

Let $P_{T}(\alpha)$ be the orthogonal projection from $\Omega^*(M)^G$ onto $F^*_{T}(M,f,\alpha)^G$. Then, we have $P_{T}(\alpha)\circ J_T$ mapping $E_T$ into the Witten instanton complex.
The following corollary shows that $J_T$ is ``almost'' the one-to-one correspondence.
\begin{corollary}\label{isomorphism without chain operations}
    When $T$ is sufficiently large, $P_T(\alpha)\circ J_T$ is an isomorphism. 
\end{corollary}
\begin{proof}
     Using Proposition \ref{spectral gap of the Witten Laplacian proposition}, we find that for any $\omega\in C^*(M,f)^G$, 
    \begin{align}\label{estimate for injectivity}
    C_2\sqrt{T}\|J_T\omega - P_T(\alpha)J_T\omega\|
        \leqslant\ & \|D_T(J_T\omega - P_T(\alpha)J_T\omega)\|\nonumber\\
        \leqslant\ & \|D_TJ_T\omega\|+ \|D_TP_T(\alpha)J_T\omega\|\nonumber\\
        \leqslant\ & (\sqrt{\alpha_0}+C_0 e^{-C_1T} + \Gamma\varepsilon)\|J_T\omega\| + \sqrt{\alpha}\|P_T(\alpha)J_T\omega\|\nonumber\\
        \leqslant\ & (\sqrt{\alpha_0}+C_0 e^{-C_1T} + \Gamma\varepsilon+\sqrt{\alpha})\|J_T\omega\|.
    \end{align}
    Thus, $P_T(\alpha)J_T\omega = 0$ only when $\omega = 0$. Thus, $P_T(\alpha)\circ J_T$ is injective. 

    Next, if we have some $\eta\in F^*_T(M,f,\alpha)^G$ such that $\eta$ is $L^2$-orthogonal to the image of $P_T(\alpha)\circ J_T$, we show that $\eta = 0$ and therefore $P_T(\alpha)\circ J_T$ is surjective. In fact, by (\ref{estimate for injectivity}), 
    \begin{align}\label{estimate for pTperp}
        \|p_T^\perp\eta\|\nonumber
         =\ &\|\eta - p_T\eta\|\nonumber\\
        \geqslant\ & \|\eta - P_T(\alpha)p_T\eta\| - \|P_T(\alpha)p_T\eta - p_T\eta\|\nonumber\\
        \geqslant\ & \|\eta\| - C_2^{-1}T^{-1/2}(\sqrt{\alpha_0}+C_0 e^{-C_1T} + \Gamma\varepsilon+\sqrt{\alpha})\|p_T\eta\|\nonumber\\
        \geqslant\ & \|\eta\| - C_2^{-1}T^{-1/2}(\sqrt{\alpha_0}+C_0 e^{-C_1T} + \Gamma\varepsilon+\sqrt{\alpha})\|\eta\|.
    \end{align}
    The second to last line in (\ref{estimate for pTperp}) is also because $\eta$ is $L^2$-orthogonal to $P_T(\alpha)p_T\eta$. Thus, 
    \begin{align}\label{contradiction estimate}
       \sqrt{\alpha}\|\eta\|\nonumber
       \geqslant\ & \|D_T\eta\| \nonumber\\
        \geqslant\ & \|D_Tp_T^\perp\eta\| - \|D_Tp_T\eta\| \nonumber\\
        \geqslant\ & C_2\sqrt{T}\|p_T^\perp\eta\| - (\sqrt{\alpha_0} + C_0 e^{-C_1T} + \Gamma\varepsilon)\|p_T\eta\|\nonumber\\
        \geqslant\ & C_2\sqrt{T}\|\eta\| - 2(\sqrt{\alpha_0} + C_0 e^{-C_1T} + \Gamma\varepsilon)\|\eta\| - \sqrt{\alpha}\|\eta\|.
    \end{align}
    This (\ref{contradiction estimate}) is true only when $\eta = 0$. 
\end{proof}

\section{Analytic realization}\label{section of chain isomorphisms}
In this section, we prove the chain isomorphism between the $G$-invariant Thom-Smale complex and the $G$-invariant Witten instanton complex. We will use the map $P_{T}(\alpha)\circ J_T$ studied in Corollary \ref{isomorphism without chain operations} as an auxiliary map. 

Before we carry out the analysis related to $P_T(\alpha)\circ J_T$, we follow \cite[Section X]{bismutandlebeau} to find a formal series. For convenience, we define a transformation
\begin{align*}
    Q_T: \Omega^*(N)^G & \to \Omega^*(N)^G\\
    \omega_{[g,v]} & \mapsto \omega_{\left[g,v/\sqrt{T}\right]},
\end{align*}
where $[g,v]\in N \cong G\times_{G_p} N_p$. 
Then, we find that on $\Omega^*(N)^G$, 
$$Q_TD_TQ_T^{-1} = D^H+\sqrt{T}\left(D^V+\hat{c}(df)\right)+\dfrac{1}{\sqrt{T}}R.$$
Correspondingly, we define $$K\omega = \omega\wedge\exp\left(-\dfrac{1}{2}|\mathbf{y}|^2\right)z_1^*\wedge\cdots\wedge z_i^*,\ \forall\ \omega\in C^*(M,f)^G.$$
Recall that $\alpha>\alpha_0$ in $F^*_T(M,f,\alpha)^G$. For each $K\omega$, the formal series is as follows. 

\begin{proposition}
    If $\delta\in\mathbb{R}$ and $\omega\in \Omega^*(\mathcal{O},o(N^-))^G$ satisfy $(d+d^*)\omega = \delta\cdot \omega$ for some $\delta\in\mathbb{R}$, then for any $\lambda\in\mathbb{C}$ satisfying $|\lambda| = \sqrt{\alpha}$, there is a formal power series $$Z(\lambda,T) = \sum_{k = 0}^\infty \sigma_k(\lambda) T^{-k/2}$$
    such that $Q_T(\lambda-D_T)Q_T^{-1}Z(\lambda,T) = K\omega$. 
\end{proposition}
\begin{proof}We adapt the proof of \cite[(10.3)]{bismutandlebeau}. 
    Applying $Q_T(\lambda-D_T)Q_T^{-1}$ to $Z(\lambda,T)$, 
    we find 
    \begin{align}
    -(D^V+\hat{c}(df))\sigma_0(\lambda) &= 0,\label{first initial value}\\
        (\lambda-D^H)\sigma_0(\lambda) - (D^V+\hat{c}(df))\sigma_1(\lambda)& = 0,\label{second initial value}\\
        (\lambda-D^H)\sigma_{k+1}(\lambda) - (D^V+\hat{c}(df))\sigma_{k+2}(\lambda) - R\sigma_k(\lambda)& = 0, \ \forall\hspace{+1mm}k\in\mathbb{Z}_{\geqslant 0}.\label{iteration relation}
    \end{align}
    According to (\ref{first initial value}) and (\ref{second initial value}), we choose 
    $$\sigma_0(\lambda) = \dfrac{K\omega}{\lambda-\delta}\hspace{+1mm},\ \sigma_1(\lambda) = 0$$
    as the two initial values. 
    
    Next, we solve (\ref{iteration relation}). By Lemma \ref{lemma of harmonic oscillators and linear parts on forms fiberwisely}, we see that 
    $(D^V+\hat{c}(df))^{-2}$
    is well-defined on the space of $L^2$-sections of $\Lambda^*T^*N\vert_{N_p}$: 
    \begin{enumerate}[label = (\arabic*)]
        \item On the kernel of $(D^V+\hat{c}(df))^2$ restricted to $\Lambda^*T^*N\vert_{N_p}$, we let 
    $(D^V+\hat{c}(df))^{-2}$ be $0$.
    \item On the orthogonal complement of this kernel, $(D^V+\hat{c}(df))^{-2}$ is given by the functional calculus \cite[Proposition 5.30]{roe1999elliptic}.
    \end{enumerate}
    In particular, since $D^V+\hat{c}(df)$ is $G$-equivariant on $\Omega^*(N)$, the operator $(D^V+\hat{c}(df))^{-2}$ is well-defined on $\Omega^*(N)^G$. Following \cite[(10.17)]{bismutandlebeau}, we apply $$(D^V+\hat{c}(df))^{-2}\circ (D^V+\hat{c}(df))$$ to (\ref{iteration relation}) and find all $\sigma_k(\lambda)$'s through
    $$\sigma_{k+2}(\lambda) = (D^V+\hat{c}(df))^{-2}(D^V+\hat{c}(df))\left((\lambda-D^H)\sigma_{k+1}(\lambda) - R\sigma_k(\lambda)\right).$$
    Then, we obtain the formal series $Z(\lambda,T)$ for $\omega$. 
\end{proof}

We now use the formal solution $Z(\lambda,T)$ to get the following estimate by adapting the proof of \cite[Theorem 10.1]{bismutandlebeau}. This estimate shows that $P_T(\alpha)\circ J_T$ is an isomorphism (but not a chain isomorphism) for large $T$, which describes $P_T(\alpha)\circ J_T - J_T$ inside and outside the radius-$2\varepsilon$ tubular neighborhoods. This estimate is the key to prove Theorem \ref{isom given under a good metric be careful}. 
\begin{proposition}\label{key estimate global L infinity}
    For any $\mu\in\mathbb{N}$, there exist $C_3>0$ and $\Gamma'>0$ $(\Gamma'$ is irrelevant to $\varepsilon)$ such that when $T$ is sufficiently large, $$\abs{P_T(\alpha)J_T\omega - J_T\omega}\leqslant C_3T^{-\mu/2}\left|\omega\right| + \Gamma'\cdot \varepsilon\cdot
    \left|J_T\omega\right|$$ for all $\omega\in \Omega^*(\mathcal{O},o(N^-))^G$. 
\end{proposition}
\begin{proof}
    Without loss of generality, we assume $(d+d^*)\omega = \delta\cdot\omega$ and $\abs{\omega} = 1$, i.e.,  $$\sum_{k_1<\cdots<k_j}c_{k_1\cdots k_j}^2 = 1$$ in the notation from Lemma \ref{local orientation valued form on critical orbits}.
    We recall that $|\lambda|^2 = \alpha>\alpha_0$ and let $$Z_\ell(\lambda, T) =  \displaystyle\sum_{k=0}^{\ell+1}\sigma_k(\lambda)T^{-k/2}.$$ When $\varepsilon$ is sufficiently small, and $T$ is sufficiently large, by the spectral gap Proposition \ref{spectral gap of the Witten Laplacian proposition}, we apply the functional calculus \cite[(9.153)]{bismutandlebeau} to $P_T(\alpha)$ and get
    \begin{align}
        & P_T(\alpha)J_T\omega - J_T\omega \nonumber\\
         =\ & \dfrac{1}{2\pi\sqrt{-1}}\int_{\abs{\lambda}^2 = \alpha}\dfrac{1}{\lambda-D_{T}}J_T\omega d\lambda - J_T\omega  \nonumber\\
         \begin{split}
         =\ &\dfrac{1}{2\pi\sqrt{-1}}\int_{\abs{\lambda}^2 = \alpha}\dfrac{1}{\lambda-D_T}J_T\omega d\lambda - \dfrac{1}{2\pi\sqrt{-1}}\int_{\abs{\lambda}^2 = \alpha}\rho\cdot Q^{-1}_T Z_{\ell}(\lambda,T)d\lambda\\
         & +\dfrac{1}{2\pi\sqrt{-1}}\int_{\abs{\lambda}^2 = \alpha}\rho\cdot Q^{-1}_T Z_{\ell}(\lambda,T)d\lambda - J_T\omega. \label{the projection using complex integral functional calculus}
         \end{split}
    \end{align}
    Recall the bump function $\rho$ on $N$. We now estimate (\ref{the projection using complex integral functional calculus}) in two parts: 

    \vspace{+1mm}
    \noindent\textbf{Part I}: On the one hand, we find that 
    \begin{align*}
        & (\lambda-D_T)\left(\rho Q_T^{-1}Z_\ell(\lambda,T)\right) - J_T\omega\\
        =\ & \rho(\lambda-D_T)Q_T^{-1}Z_\ell(\lambda,T) - c(d\rho)Q_T^{-1}Z_\ell(\lambda,T) - \rho Q_T^{-1}K\omega\\
        =\ & Q_T^{-1}{\Big[}Q_T(\rho)\cdot\left((\lambda-D^H)\sigma_{\ell+1}(\lambda)T^{-(\ell+1)/2}-R\sigma_\ell(\lambda)T^{-(\ell+1)/2}-R\sigma_{\ell+1}(\lambda)T^{-(\ell+2)/2}\right)\\
        &\qquad\qquad\qquad\qquad\qquad\qquad\qquad\qquad\qquad\qquad\qquad\qquad\qquad\qquad-c(Q_T(d\rho))Z_\ell(\lambda,T){\Big]}\\
        =\ & \rho\cdot\left[Q_T^{-1}((\lambda-D^H)\sigma_{\ell+1}(\lambda))T^{-(\ell+1)/2}
        -Q_T^{-1}(R\sigma_\ell(\lambda))T^{-(\ell+1)/2}-Q_T^{-1}(R\sigma_{\ell+1}(\lambda))T^{-(\ell+2)/2}\right]\\
        & - c(d\rho)(Q_T^{-1}Z_\ell(\lambda,T)).
    \end{align*}
    Following \cite[(10.16)]{bismutandlebeau}, we consider the Schwartz semi-norms \cite[Part 4, Section 8]{handbookofglobalanalysis} of $\sigma_\ell(\lambda)$ and $\sigma_{\ell+1}(\lambda)$. For any $\nu\in\mathbb{N}$ and $\nu_1+\cdots+\nu_{m-n} = \nu$, there are $\zeta_\nu, \zeta'_\nu>0$ such that $$\left|\partial^{\nu_1}_{y_1}\cdots\partial^{\nu_{m-n}}_{y_{m-n}}\sigma_\ell(\lambda)\right|\leqslant\dfrac{\zeta_\nu}{\left(1+|\mathbf{y}|\right)^\nu}\ \ \text{and}\ \ \left|\partial^{\nu_1}_{y_1}\cdots\partial^{\nu_{m-n}}_{y_{m-n}}\sigma_{\ell+1}(\lambda)\right|\leqslant\dfrac{\zeta'_\nu}{\left(1+|\mathbf{y}|\right)^\nu}$$
    uniformly for $\abs{\lambda} = \sqrt{\alpha}$. Let $\|\cdot\|_\nu$ denote the $\nu$-th Sobolev norm on $M$. 
    Then, we have a constant $\widetilde{C}_\nu>0$ such that  
    \begin{align*}
         \|(\lambda-D_T)(\rho Q_T^{-1}s_\ell(\lambda,T))-J_T\omega\|_\nu 
        \leqslant \widetilde{C}_\nu\cdot T^{-(\ell+1)/2}\cdot T^{\nu/2}\cdot T^{-\text{rank}(N)/2}.
    \end{align*}
    when $T$ is sufficiently large.
    
    On the other hand, 
    by Proposition \ref{spectral gap of the Witten Laplacian proposition}, 
    we find that for all $\eta\in\Omega^*(M)^G$,
    \begin{align}\label{estimate 1 for sobolev norm nu}
    \|(\lambda-D_T)\eta\|\geqslant (\sqrt{\alpha}-\sqrt{\alpha_0} - C_0 e^{-C_1T} - \Gamma\varepsilon)\|\eta\|.
    \end{align}
    In addition, by \cite[(6.18)]{wittendeformationweipingzhang} or verifying inductively using the Gårding inequality of $D = d+d^*$ on $M$, we have a constant $C_4>0$ such that when $T$ is sufficiently large, 
    \begin{align}\label{estimate 2 for sobolev norm nu}
        \|\eta\|_{\nu+1}\leqslant C_4 T^{\nu+1}\left(\|(\lambda-D_T)\eta\|_\nu + \|\eta\|\right)
    \end{align}
    for all $\eta\in\Omega^*(M)^G$. 
    By (\ref{estimate 1 for sobolev norm nu}) and (\ref{estimate 2 for sobolev norm nu}), there is a constant $C_5>0$ such that 
    for any sufficiently large $T$ and any unit eigenform $\omega$ of the twisted $d+d^*$ on $C^*(M,f)^G$, 
    \begin{align}
        & \|\rho Q_T^{-1}Z_\ell(\lambda,T) - (\lambda-D_T)^{-1}J_T\omega\|_{\nu+1}  \nonumber\\
        \leqslant\ & C_4 T^{\nu+1}\left(\|(\lambda-D_T)\rho Q_T^{-1} Z_\ell(\lambda, T) - J_T\omega\|_\nu + \|\rho Q_T^{-1} Z_\ell(\lambda, T) - (\lambda-D_T)^{-1}J_T\omega\|\right)  \nonumber\\
        \leqslant\ & C_5 T^{\nu+1} \|(\lambda-D_T)\rho Q_T^{-1} Z_\ell(\lambda,T) - J_T\omega\|_\nu  \nonumber\\
        \leqslant\ & C_5 T^{\nu+1}\cdot \widetilde{C}_\nu\cdot T^{-(\ell+1)/2}\cdot T^{\nu/2}\cdot T^{-\text{rank}(N)/2}.\label{part 1 detailed L infinity estimate}
    \end{align}
    \vspace{+1mm}
    \noindent\textbf{Part II}: We look at the subtraction
    \begin{align}\label{subtraction do not forget}
        & \dfrac{1}{2\pi\sqrt{-1}}\int_{|\lambda|^2 = \alpha}\rho Q_T^{-1}Z_\ell(\lambda,T)d\lambda - J_T\omega \nonumber\\
         =\ & \dfrac{1}{2\pi\sqrt{-1}}\int_{|\lambda|^2 = \alpha} \rho Q_T^{-1}\left(\sigma_2(\lambda)T^{-1}+\sigma_3(\lambda)T^{-3/2}+\cdots+\sigma_{\ell+1}(\lambda)T^{-(\ell+1)/2}\right)d\lambda.
    \end{align}
    Recall that $\supp(\rho)\subseteq N(2\varepsilon)$. By checking the degree of Hermite polynomials \cite[Section 8.6]{taylor2023partialvol2} arising from the eigenfunctions of the harmonic oscillator, we find $\Gamma_k>0$ such that 
    \begin{align}
        \left|\dfrac{1}{2\pi\sqrt{-1}}\int_{|\lambda|^2 = \alpha}\rho Q_T^{-1}(\sigma_k(\lambda))T^{-k/2}d\lambda\right|\leqslant \Gamma_k\cdot(2\varepsilon)^k\cdot\rho\exp\left(-\dfrac{T}{2}|\mathbf{y}|^2\right)\label{part 2 detailed L infinity estimate}
    \end{align}
    for each $k = 1,\cdots,\ell+1$.  Here, $\Gamma_k$ is irrelevant to $\varepsilon$.
    
    Combining (\ref{part 1 detailed L infinity estimate}), (\ref{subtraction do not forget}), and (\ref{part 2 detailed L infinity estimate}), by \cite[Corollary 6.22]{warner2013foundations}, there is a $C_6>0$ such that
    \begin{align*}
        \left|P_T(\alpha)J_T\omega - J_T\omega\right| \leqslant C_6 T^{-\left(\ell-3\nu-1+\text{rank}(N)\right)/2} + \sum_{k = 1}^{\ell+1}\Gamma_k\cdot(2\varepsilon)^k\cdot\rho\exp\left(-\dfrac{T}{2}|\mathbf{y}|^2\right). 
    \end{align*}
    The proposition is verified after reorganizing all the constants. 
\end{proof}

\begin{remark}\label{remark also about small epsilons}
\normalfont
    As in Remark \ref{remark of small epsilon for adjustment}, the constant $\Gamma'$ is independent of the choice of $\varepsilon$. Thus, we can choose sufficiently small $\varepsilon$ so that $\Gamma'\cdot\varepsilon < 1/2$. 
\end{remark}

Finally, with Proposition \ref{key estimate global L infinity}, we can prove Theorem \ref{isom given under a good metric be careful} and establish the analytic realization.  
As in \cite[Definition 6.10]{Bismut1994} and \cite[Definition 6.8]{wittendeformationweipingzhang}, we define two automorphisms on the space $C^*(M,f)^G$. Let $\mathcal{F}_T$ be the linear map
\begin{align}\label{diagonal matrix by function values on critical set}
\begin{split}
      \mathcal{F}_T: C^*(M,f)^G &\to C^*(M,f)^G\\
    \omega\in\Omega^j(\mathcal{O},o(N^-))^G&\mapsto e^{Tf(\mathcal{O})}\cdot\omega,
\end{split}
\end{align}
where $\mathcal{O}$ is a critical orbit. We immediately see $\mathcal{F}_T$ is an automorphism.

Meanwhile, we recall the coordinate $\mathbf{y} = (y_1,\cdots,y_{m-n})$ around each critical orbit $\mathcal{O}$, where the Morse index of $\mathcal{O}$ is $i$, and $\dim\mathcal{O} = n$. Using the same bump function $\rho$ as that in (\ref{definition of J_T will be used later}), the map
\begin{align}\label{main body integral but with epsilon adjust by error}
\begin{split}
\mathcal{N}_T: C^*(M,f)^G &\to C^*(M,f)^G\\
    \omega\in\Omega^j(\mathcal{O},o(N^-))^G&\mapsto \omega\cdot\int_{\mathbb{R}^i}\rho(y_1,\cdots,y_i,0,\cdots,0)e^{-T\left(y_1^2+\cdots+y_i^2\right)}dy_1\cdots dy_i
    \end{split}
\end{align}
is also an automorphism. 

\begin{proof}[Proof of Theorem \ref{isom given under a good metric be careful}]
Recall the map 
     \begin{align*}
    \Phi_{T}: F^k_T(M,f,\alpha)^G &\to C^k(M,f)^G\\
    \eta&\mapsto\sum_{i = 0}^k(\pi_i)_*\left(\left.e^{Tf}\cdot\eta\right\vert_{\overline{W^u(\mathcal{O}_i)}}\right) \ \ (k = 0,1,\cdots,m)
\end{align*}
between two chain complexes. By Proposition \ref{chain map verifications}, this $\Phi_T$ is a chain map.
Also, for any $0\leqslant r\leqslant k$ and any $\omega\in \Omega^{k-r}(\mathcal{O}_r,\mathcal{E}_r)^G$ satisfying $\abs{\omega} = 1$, by Proposition \ref{key estimate global L infinity}, we find
\begin{align}\label{main plus tail}
     \Phi_TP_T(\alpha)J_T\omega 
     = \Phi_T(J_T\omega + \tau) + \sum_{i = 0}^k\sum_{\mathcal{O}\subseteq\mathcal{O}_i}e^{Tf(\mathcal{O})}\cdot(\pi_i)_*\left(e^{T(f-f(\mathcal{O}))}\cdot\tau'\rvert_{\overline{W^u(\mathcal{O})}}\right),
\end{align}
where $\mathcal{O}$ is an orbit in the union $\mathcal{O}_i$ of all critical orbits having the same Morse index $i$, and $\tau, \tau'\in\Omega^*(M)^G$ satisfies that $$\abs{\tau}\leqslant \Gamma'\cdot \varepsilon\cdot\abs{J_T\omega}, \text{\ and\ } \abs{\tau'}\leqslant C_3T^{-\mu}.$$ 

We first look at the tail term in (\ref{main plus tail}) given by $\tau'$. Since for every critical orbit $\mathcal{O}\subseteq \mathcal{O}_i\ (0\leqslant i\leqslant k)$, there is $f-f(\mathcal{O})\leqslant 0$ on $\overline{W^u(\mathcal{O})}$, we then find a constant $\xi_{\mathcal{O}}>0$ such that
\begin{align}\label{very tail estimate}
    \left|(\pi_i)_*\left(e^{T(f-f(\mathcal{O}))}\cdot\tau'\rvert_{\overline{W^u(\mathcal{O})}}\right)\right|\leqslant \xi_{\mathcal{O}}\cdot C_3T^{-\mu}\ \ (\mathcal{O}\subseteq\mathcal{O}_i, 0\leqslant i\leqslant k)
\end{align}
when $T$ is sufficiently large. 

Next, we look at the main term 
\begin{align}\label{separate into three parts to finish main result proof}
     \Phi_T(J_T\omega+\tau) = \sum_{i = 0}^k \sum_{\mathcal{O}\subseteq\mathcal{O}_i}e^{Tf(\mathcal{O})}\cdot(\pi_i)_*\left(\left.e^{T(f-f(\mathcal{O}))}\cdot (J_T\omega+\tau)\right\vert_{\overline{W^u(\mathcal{O})}}\right)
\end{align}
in (\ref{main plus tail}). It separates into three parts: 

\vspace{+1mm}
\noindent \textbf{Part I}: When $i = r$, we write $\omega$ uniquely into the sum
$\displaystyle\sum_{\mathcal{O}\subseteq\mathcal{O}_r}\omega_{\mathcal{O}},$
where $\omega_{\mathcal{O}}\in\Omega^{k-r}(\mathcal{O},o(N^-))^G$ for each critical orbit $\mathcal{O}\subseteq\mathcal{O}_r$. 
Applying (\ref{main body integral but with epsilon adjust by error}), we obtain
\begin{align}\label{tail in main}
&\sum_{\mathcal{O}\subseteq\mathcal{O}_r}e^{Tf(\mathcal{O})}\cdot(\pi_r)_*\left(\left.e^{T(f-f(\mathcal{O}))}\cdot (J_T\omega+\tau)\right\vert_{\overline{W^u(\mathcal{O})}}\right) \nonumber\\
=\ & \sum_{\mathcal{O}\subseteq\mathcal{O}_r}e^{Tf(\mathcal{O})}\cdot \mathcal{N}_T(\omega_{\mathcal{O}}) + \sum_{\mathcal{O}\subseteq\mathcal{O}_r}e^{Tf(\mathcal{O})}\cdot(\pi_r)_*\left(\left.e^{T(f-f(\mathcal{O}))}\cdot \tau\right\vert_{\overline{W^u(\mathcal{O})}}\right) \nonumber\\
& + \sum_{\substack{\mathcal{O}'\subseteq\mathcal{O}_r\\\mathcal{O}'\neq\mathcal{O}}}\sum_{\mathcal{O}\subseteq\mathcal{O}_r}e^{Tf(\mathcal{O}')}\cdot(\pi_r)_*\left(\left.e^{T(f-f(\mathcal{O}'))}\cdot (J_T\omega_{\mathcal{O}}+\tau)\right\vert_{\overline{W^u(\mathcal{O}')}}\right).
\end{align}
By \cite[Lemma 3.3]{Austin1995}, the boundary of $\overline{W^u(\mathcal{O})}$ is given by
$$\bigcup_{\nu = 1}^r\bigcup_{i_0<i_1<\cdots<i_{\nu-1} < r}\mathcal{M}(\mathcal{O},\mathcal{O}_{i_{\nu-1}})\times_{\mathcal{O}_{i_{\nu-1}}}\cdots\times_{\mathcal{O}_{i_1}}\mathcal{M}(\mathcal{O}_{i_1},\mathcal{O}_{i_0})\times_{\mathcal{O}_{i_0}} W^u(\mathcal{O}_{i_0}).$$
Thus, in (\ref{tail in main}), $\mathcal{O}$ is disjoint from $\overline{W^u(\mathcal{O}')}$. 
Therefore, there is a constant $\xi_{\mathcal{O}'}>0$ such that 
\begin{align}\label{true tail in main}
    \abs{(\pi_r)_*\left(e^{T(f-f(\mathcal{O}'))}\cdot (J_T\omega_{\mathcal{O}}+\tau)\vert_{\overline{W^u(\mathcal{O}')}}\right)}\leqslant e^{-T\xi_{\mathcal{O}'}}
\end{align}
for all $\mathcal{O}'\neq\mathcal{O}$ in (\ref{tail in main}). In addition, we have a constant $\Gamma''>0$ (irrelevant to $\varepsilon$) such that 
\begin{align}
\left|(\pi_r)_*\left(\left.e^{T(f-f(\mathcal{O}))}\cdot \tau\right\vert_{\overline{W^u(\mathcal{O})}}\right)\right|\leqslant
\Gamma''\cdot\varepsilon\cdot T^{-r/2}
\end{align}
in (\ref{tail in main}) when $T$ is sufficiently large.  

\vspace{+1mm}
\noindent \textbf{Part II}: When $i<r$, again by \cite[Lemma 3.3]{Austin1995}, the boundary of $\overline{W^u(\mathcal{O}_i)}$ is equal to 
$$\bigcup_{\nu = 1}^i\bigcup_{i_0<i_1<\cdots<i_\nu = i}\mathcal{M}(\mathcal{O}_{i_\nu},\mathcal{O}_{i_{\nu-1}})\times_{\mathcal{O}_{i_{\nu-1}}}\cdots\times_{\mathcal{O}_{i_1}}\mathcal{M}(\mathcal{O}_{i_1},\mathcal{O}_{i_0})\times_{\mathcal{O}_{i_0}} W^u(\mathcal{O}_{i_0}).$$
Therefore, $\mathcal{O}_r$ is disjoint from $\overline{W^u(\mathcal{O}_i)}$. Thus, there exists a constant $\xi_{ir}>0$ such that 
\begin{align}\label{upper}
    \abs{(\pi_i)_*\left(\left.e^{T(f-f(\mathcal{O}))}\cdot J_T\omega\right\vert_{\overline{W^u(\mathcal{O})}}\right)}\leqslant e^{-T\xi_{ir}}
\end{align}
for all $\mathcal{O}\subseteq\mathcal{O}_i$ when $i>r$.  

\vspace{+1mm}
\noindent \textbf{Part III}: When $i>r$, we have a constant $\xi_{ir}>0$ such that 
\begin{align}\label{lower}
    \abs{(\pi_i)_*\left(\left.e^{T(f-f(\mathcal{O}))}\cdot J_T\omega\right\vert_{\overline{W^u(\mathcal{O})}}\right)}\leqslant \xi_{ir}
\end{align}
for all $\mathcal{O}\subseteq\mathcal{O}_i$. 

\vspace{+1mm}
Finally, we let $\kappa = \dim C^*(M,f)^G$ and recall that $m = \dim M$. The Gaussian integral (adjusted by $\rho$) in (\ref{main body integral but with epsilon adjust by error}) satisfies
\begin{align}\label{the estimate of gaussian integrals}
    \int_{\mathbb{R}^i}\rho(y_1,\cdots,y_i,0,\cdots,0)e^{-T\left(y_1^2+\cdots+y_i^2\right)}dy_1\cdots dy_i = O(T^{-i/2})
\end{align}
for all $0\leqslant i\leqslant m$ when $T\to +\infty$.
Combining {(\ref{diagonal matrix by function values on critical set})}\text{\hspace{+0.5mm}$-$\hspace{+0.5mm}}{(\ref{the estimate of gaussian integrals})}, 
after selecting and arranging an orthonormal basis of $C^*(M,f)^G$, we find that $$\Phi_T P_T(\alpha)J_T = \mathcal{F}_T\circ\mathcal{N}_T\circ (X+Y),$$ 
    where $X$ and $Y$ are $\kappa\times\kappa$ matrices such that when $T\to+\infty$, their $(s,t)$-th entries satisfy
    \begin{align*}
      \begin{array}{ll}
      X_{st} = 1 + \Gamma''\varepsilon& \text{\ when\ } s=t \\
      \abs{X_{st}}\leqslant O(T^{m/2}) & \text{\ when\ } s>t \\
      X_{st} = 0 & \text{\ when\ } s<t
\end{array} \text{,\ and\ } \abs{Y_{st}} \leqslant
      O(T^{-\mu+m/2}) \text{\ for any\ } (s,t).
    \end{align*}
    According to the Leibniz formula \cite[(4.16)]{greub1981linear} for determinants, we find when $T\to+\infty$, 
    $$\det(X+Y) = (1+\Gamma''\varepsilon)^\kappa+O(T^{-\mu+\kappa m/2}).$$
Notice that $\Gamma''$ is irrelevant to $\varepsilon$, we can let $\Gamma''\varepsilon\ll 1$. By choosing a sufficiently large $\mu$ in Proposition \ref{key estimate global L infinity}, the map $\Phi_T P_T(\alpha) J_T$ is an isomorphism when $T>0$ is sufficiently large. 

In addition, by Corollary \ref{isomorphism without chain operations}, $P_T(\alpha) J_T$ is an isomorphism between vector spaces when $T$ is sufficiently large. Therefore, $\Phi_T$ is a chain isomorphism for sufficiently large $T$. 
\end{proof}

\section{Examples and corollaries}\label{section of computations and examples}
In this section, we first give some examples using the $G$-invariant Thom-Smale complex to compute the Betti numbers of $M$. Next, we apply the analytic realization to deduce $G$-invariant Morse-Bott inequalities and discuss some of their extensions.

\begin{example}\label{example 1 of topological result}
\normalfont
   If $M = G$ acts by the left multiplication, and $f$ is a constant, we have $$C^k(M,f)^G = \Omega^k(G)^G,\  \forall\hspace{+1mm} k\in\mathbb{Z}_{\geqslant 0},$$
    and the boundary map $\partial = d$. This repeats the $G$-invariant de Rham complex of $G$. 
\end{example}

\begin{example}\label{example 2 of topological result}
\normalfont
    Let $\mathbb{S}^n$ be the unit $n$-sphere. We let $M = \mathbb{S}^2\times\mathbb{S}^1$ and $G = \mathbb{S}^1$ with the action
    \begin{align*}
        & \theta\cdot (x,y,z,t) \ \ (\text{Here,}\ x^2+y^2+z^2 = 1)\\
        =\ &\left(x\cos\theta-y\sin\theta, x\sin\theta+y\cos\theta,z,2\theta+t\right).
    \end{align*}
Then, we see that $$f(x,y,z,t) =
(x^2-y^2)\cos t+2xy\sin t$$ is an $\mathbb{S}^1$-invariant function. In addition, $\crit(f)$ consists of the following $\mathcal{O}_0, \mathcal{O}_1$ and $\mathcal{O}_2$: 
\begin{enumerate}[label = (\arabic*)]
\item $\mathcal{O}_0 = 
\left\{(-\sin t,\cos t,0,2t)\in\mathbb{S}^2\times\mathbb{S}^1:t\in\mathbb{R}\right\}$, which is diffeomorphic to the unit circle. 

\item $\mathcal{O}_1 = \mathcal{O}_{1,1}\cup \mathcal{O}_{1,2}$, where the two orbits are
\begin{align*}
    \mathcal{O}_{1,1}& =
\left\{(0,0,1,t)\in\mathbb{S}^2\times\mathbb{S}^1: t\in\mathbb{R}\right\},\\
\mathcal{O}_{1,2}& = 
\left\{(0,0,-1,t)\in\mathbb{S}^2\times\mathbb{S}^1: t\in\mathbb{R}\right\}.
\end{align*}
Both $\mathcal{O}_{1,1}$ and $\mathcal{O}_{1,2}$ are diffeomorphic to the unit circle. 
\item $\mathcal{O}_2\coloneqq
\left\{(\cos t,\sin t,0,2t)\in\mathbb{S}^2\times\mathbb{S}^1: t\in\mathbb{R}\right\}$, which is diffeomorphic to the unit circle. 
\end{enumerate}
Observing that $W^u(\mathcal{O}_0)$ and $W^u(\mathcal{O}_2)$ are orientable, while $W^u(\mathcal{O}_{1,1})$ and $W^u(\mathcal{O}_{1,2})$ are nonorientable, the $G$-invariant Thom-Smale complex is given by
\begin{align*}
    0\xrightarrow[]{}\Omega^0(\mathcal{O}_0)^G\xrightarrow[]{0}\Omega^1(\mathcal{O}_0)^G\xrightarrow[]{0}\Omega^0(\mathcal{O}_2)^G\xrightarrow[]{0}\Omega^1(\mathcal{O}_2)^G\xrightarrow[]{}0.
\end{align*}
Thus, we find that for $k = 0,1,2,3$, the $k$-th cohomology group of $M$ is $\mathbb{R}$.
This gives the same result as applying the K\"unneth formula \cite[Exercise 4.8]{morita2001geometry}. 
\end{example}

\begin{example}\label{example 3 of topological result}
    \normalfont
   Let $G = \mathbb{T}^2$ be the $2$-torus and $M = \mathbb{S}^2\times\mathbb{T}^2$ admitting the $\mathbb{T}^2$-action
    \begin{align*}
        & (\theta_1,\theta_2)\cdot (x,y,z,t,s) \\
        =\ &\left(x\cos(\theta_1+2\theta_2)-y\sin(\theta_1+2\theta_2), x\sin(\theta_1+2\theta_2)+y\cos(\theta_1+2\theta_2),z,t+2\theta_1,s+\theta_2\right).
    \end{align*}
Then, we find that $$f(x,y,z,t,s) \coloneqq
(x^2-y^2)\cos(t+4s)+2xy\sin(t+4s)$$ is a $\mathbb{T}^2$-invariant function. Its critical set consists of the following $\mathcal{O}_0, \mathcal{O}_1$ and $\mathcal{O}_2$: 
\begin{enumerate}[label = (\arabic*)]
\item $\mathcal{O}_0=
\left\{(-\sin t, \cos t,0,2t-4s,
s)\in\mathbb{S}^2\times\mathbb{T}^2:t,s\in\mathbb{R}\right\}$, which is diffeomorphic to $\mathbb{T}^2$. 
\item $\mathcal{O}_1=\mathcal{O}_{1,1}\cup \mathcal{O}_{1,2}$, where the two critical orbits are
\begin{align*}
    \mathcal{O}_{1,1}&=
\left\{(0,0,1,t,s)\in\mathbb{S}^2\times\mathbb{T}^2:t,s\in\mathbb{R}\right\},\\
\mathcal{O}_{1,2}&=
\left\{(0,0,-1,t,s)\in\mathbb{S}^2\times\mathbb{T}^2:t,s\in\mathbb{R}\right\}.
\end{align*}
Both $\mathcal{O}_{1,1}$ and $\mathcal{O}_{1,2}$ are diffeomorphic to $\mathbb{T}^2$. 
\item $\mathcal{O}_2 = 
\left\{(\cos t,\sin t,0,2t-4s,s)\in\mathbb{S}^2\times\mathbb{T}^2:t,s\in\mathbb{R}\right\}$, 
which is diffeomorphic to $\mathbb{T}^2$. 
\end{enumerate}
Since $W^u(\mathcal{O}_0)$ and $W^u(\mathcal{O}_2)$ are orientable, while the unstable manifolds of $W^u(\mathcal{O}_{1,1})$ and $W^u(\mathcal{O}_{1,2})$ are nonorientable, we get the $G$-invariant Thom-Smale complex
\begin{align*}
    0\xrightarrow[]{}\Omega^0(\mathcal{O}_0)^G\xrightarrow[]{0}\Omega^1(\mathcal{O}_0)^G\xrightarrow[]{0}\Omega^0(\mathcal{O}_2)^G\oplus\Omega^2(\mathcal{O}_0)^G\xrightarrow[]{0}\Omega^1(\mathcal{O}_2)^G\xrightarrow[]{0}\Omega^2(\mathcal{O}_2)^G\xrightarrow[]{}0.
\end{align*}
Thus, we find that for $k = 0,4$, the $k$-th cohomology group is $\mathbb{R}$, and for $k = 1,2,3$, 
the group is $\mathbb{R}\oplus\mathbb{R}$. 
This gives the same result as applying the K\"unneth formula. 
\end{example}
\begin{example}\label{example 4 of topological result}
\normalfont
    We define the action of $G = \mathbb{S}^1$ on $M = \mathbb{S}^3$: 
    \begin{align*}
           & \theta\cdot (x,y,z,w)  \ \ (\text{Here,}\ x^2+y^2+z^2+w^2 = 1)\\
           =\ &(x\cos(2\theta)-y\sin(2\theta), x\sin(2\theta)+y\cos(2\theta), z\cos\theta-w\sin\theta, z\sin\theta+w\cos\theta).
    \end{align*}
    Then, we find the function
    $$f(x,y,z,w) = (z^2-w^2)x+2zwy$$
    is an $\mathbb{S}^1$-invariant Morse-Bott function, of which the critical set consists of
    \begin{align*}
        \mathcal{O}_0& = \left\{\left(-\dfrac{\sqrt{3}}{3}\cos(2t),\dfrac{\sqrt{3}}{3}\sin(2t),\dfrac{2\sqrt{3}}{3}\cos t,\dfrac{2\sqrt{3}}{3}\sin t\right)\in\mathbb{S}^3: t\in\mathbb{R}\right\},\\
    \mathcal{O}_1& = \left\{(\cos t,\sin t,0,0)\in\mathbb{S}^3: t\in\mathbb{R}\right\},\\
    \mathcal{O}_2& = \left\{\left(\dfrac{\sqrt{3}}{3}\cos(2t),\dfrac{\sqrt{3}}{3}\sin(2t),\dfrac{2\sqrt{3}}{3}\cos t,\dfrac{2\sqrt{3}}{3}\sin t\right)\in\mathbb{S}^3: t\in\mathbb{R}\right\}.
    \end{align*}
    All of them are diffeomorphic to $\mathbb{S}^1$. 
    Since $W^u(\mathcal{O}_1)$ is nonorientable, the $G$-invariant Thom-Smale complex is 
    \begin{align*}
    0\xrightarrow[]{}\Omega^0(\mathcal{O}_0)^G\xrightarrow[]{0}\Omega^1(\mathcal{O}_0)^G\xrightarrow[]{\omega\mapsto\int_0^{2\pi}\omega}\Omega^0(\mathcal{O}_2)^G\xrightarrow[]{0}\Omega^1(\mathcal{O}_2)^G\xrightarrow[]{}0.
\end{align*}
Therefore, we find for $k = 0,3$, the $k$-th cohomology group is $\mathbb{R}$, and for $k = 1,2$, the group is $0$. This is exactly the de Rham cohomology of $\mathbb{S}^3$. 
\end{example}

We now give some corollaries of the analytic result Theorem \ref{isom given under a good metric be careful}. A straightforward one counts the number of eigenvalues of $D_T^2$ on $\Omega^*(M)^G$.

\begin{corollary}\label{application 1 of analytic result}
    For any constant $\alpha>\alpha_0$, when $T$ is sufficiently large, the number of eigenvalues $\leqslant \alpha$ of $D_T^2\vert_{\Omega^*(M)^G}$ is equal to $\dim C^*(M,f)^G$. 
\end{corollary}
\begin{proof}
    This is given by the definition of $F_T^*(M,f,\alpha)^G$.
\end{proof}

In addition, since both $C^*(M,f)^G$ and $F_T^*(M,f,\alpha)^G$ are finite dimensional, we obtain simplified proofs of the weak and strong $G$-invariant Morse-Bott inequalities associated to our $f$. We recall the following notations:
\begin{enumerate}[label = (\arabic*)]
    \item $m = \dim M$;
    \item $H^k(M)^G$ is the $k$-th $G$-invariant de Rham cohomology group of $M$;
    \item $H^j(\mathcal{O}_i,\mathcal{E}_i)^G$ is the $j$-th $G$-invariant de Rham cohomology group of $\mathcal{O}_i$ with local coefficients $\mathcal{E}_i$ (See (\ref{de Rham twisted with coefficients})).
\end{enumerate}
The weak version is as follows. 
\begin{corollary}\label{application 2 of analytic result}
   For any $0\leqslant k\leqslant m$, 
   $\dim C^k(M,f)^G\geqslant \dim H^k(M)^G$. 
\end{corollary}
\begin{proof}
    Since $H^k(M)^G$ is isomorphic to the kernel of $D_T^2$ restricted on $\Omega^k(M)^G$, we find
    $$\dim C^k(M,f)^G = \dim F_T^k(M,f,\alpha)^G\geqslant \dim\ker\left(D_T^2\vert_{\Omega^k(M)^G}\right) = \dim H^k(M)^G$$
    and get the weak inequalities. 
\end{proof}

We then state the strong version (compare with \cite[Corollary 3.9]{Austin1995}). Since there are 
\begin{corollary}\label{morse bott inequalities need proof}
     For any $0\leqslant k\leqslant m$, we have 
    \begin{align}\label{inequalities strong expression}
        \sum_{r = 0}^k\sum_{i+j = r} (-1)^{k-r} \dim H^j(\mathcal{O}_i,\mathcal{E}_i)^G \geqslant \sum_{r = 0}^k (-1)^{k-r}\dim H^r(M)^G
    \end{align}
    and call them the $G$-invariant Morse-Bott inequalities associated to $f$. 
\end{corollary}
\begin{proof}
    Applying the dimension formula to the Witten instanton complex, we have 
    \begin{align}\label{left side ineq}
    & \dim F_T^k(M,f,\alpha) \nonumber\\
    =\ &\dim\ker\left(d_T\vert_{F_T^k(M,f,\alpha)^G}\right)+\dim\image\left(d_T\vert_{F_T^k(M,f,\alpha)^G}\right) \nonumber\\
     =\ & \dim H^k(M)^G+\dim\image\left(d_T\vert_{F_T^k(M,f,\alpha)^G}\right) + \dim\image\left(d_T\vert_{F_T^{k-1}(M,f,\alpha)^G}\right). 
    \end{align}
    For the same reason on the Thom-Smale complex, we find
    \begin{align}\label{right side ineq}
        & \dim C^k(M,f)^G \nonumber\\
        =\ & \sum_{i+j = k}\dim \Omega^j(\mathcal{O}_i,\mathcal{E}_i)^G \nonumber\\
     =\ & \sum_{i+j = k} \dim\ker \left(d\vert_{\Omega^j(\mathcal{O}_i,\mathcal{E}_i)^G}\right) + \dim\image \left(d\vert_{\Omega^j(\mathcal{O}_i,\mathcal{E}_i)^G}\right) \nonumber\\
        =\ & \sum_{i+j = k} \dim H^j(\mathcal{O}_i,\mathcal{E}_i)^G + \dim\image \left(d\vert_{\Omega^j(\mathcal{O}_i,\mathcal{E}_i)^G}\right) + \dim\image \left(d\vert_{\Omega^{j-1}(\mathcal{O}_i,\mathcal{E}_i)^G}\right). 
    \end{align}
    By (\ref{left side ineq}) and (\ref{right side ineq}), we obtain 
    \begin{align}\label{left ineq 11}
        & \sum_{r = 0}^k (-1)^{k-r}\dim F_T^r(M,f,\alpha)^G \nonumber\\
        =\ & \sum_{r = 0}^k (-1)^{k-r}\left(\dim H^r(M)^G+\dim\image\left(d_T\vert_{F_T^r(M,f,\alpha)^G}\right) + \dim\image\left(d_T\vert_{F_T^{r-1}(M,f,\alpha)^G}\right)\right)\nonumber\\
        =\ & \dim\image\left(d_T\vert_{F^k_T(M,f,\alpha)^G}\right) + \sum_{r = 0}^k (-1)^{k-r}\dim H^r(M)^G
    \end{align}
    and then 
    \begin{align}\label{right ineq 22}
       & \sum_{r = 0}^k (-1)^{k-r}\dim C^r(M,f)^G\nonumber\\
       =\ & \sum_{r = 0}^k \sum_{i+j = r} (-1)^{k-r}\left(\dim\ker H^j(\mathcal{O}_i,\mathcal{E}_i)^G + \dim\image \left(d\vert_{\Omega^j(\mathcal{O}_i,\mathcal{E}_i)^G}\right) + \dim\image \left(d\vert_{\Omega^{j-1}(\mathcal{O}_i,\mathcal{E}_i)^G}\right)\right)\nonumber\\
       =\ &\sum_{i+j = k}\dim\image\left(d\vert_{\Omega^{j}(\mathcal{O}_i,\mathcal{E}_i)^G}\right) + \sum_{r = 0}^k \sum_{i+j = r}(-1)^{k-r} \dim H^j(\mathcal{O}_i,\mathcal{E}_i)^G
    \end{align}
    By Theorem \ref{isom given under a good metric be careful}, (\ref{left ineq 11}), and (\ref{right ineq 22}), we get
    \begin{align}\label{final ineq}
         & \sum_{r = 0}^k \sum_{i+j = r}(-1)^{k-r} \dim H^j(\mathcal{O}_i,\mathcal{E}_i)^G  - \sum_{r = 0}^k (-1)^{k-r}\dim H^r(M)^G \nonumber\\
         =\ & \dim\image\left(d_T\vert_{F^k_T(M,f,\alpha)^G}\right) - \sum_{i+j = k}\dim\image\left(d\vert_{\Omega^{j}(\mathcal{O}_i,\mathcal{E}_i)^G}\right) \nonumber\\
         =\ & \dim\image\left(\partial\vert_{C^k(M,f)^G}\right) - \sum_{i+j = k}\dim\image\left(d\vert_{\Omega^{j}(\mathcal{O}_i,\mathcal{E}_i)^G}\right).
    \end{align}
    Since $C^*(M,f)^G$ is finite dimensional, for each $0\leqslant j\leqslant k$, we choose an independent subset $$\{\omega^j_{1},\cdots,\omega^j_{s}\}\subseteq \Omega^j(\mathcal{O}_{k-j},\mathcal{E}_i)^G$$ such that $\{d\omega^j_1,\cdots, d\omega^j_s\}$ is a basis of $\image\left(d\vert_{\Omega^{j}(\mathcal{O}_i,\mathcal{E}_i)^G}\right)$. 
    By the definition of $\partial$, the linear map 
    \begin{align*}
        \image\left(d\vert_{\Omega^{0}(\mathcal{O}_k,\mathcal{E}_k)^G}\right)\oplus\image\left(d\vert_{\Omega^{0}(\mathcal{O}_k,\mathcal{E}_k)^G}\right)\cdots\oplus\image\left(d\vert_{\Omega^{k}(\mathcal{O}_0,\mathcal{E}_0)^G}\right) & \to \image\left(\partial\vert_{C^k(M,f)^G}\right) \\
        (d\omega^0_{s_0},d\omega^1_{s_1},\cdots,d\omega^k_{s_k}) &\mapsto \partial\omega^0_{s_0}+\partial\omega^1_{s_1}+\cdots+\partial\omega^k_{s_k}
    \end{align*}
    is injective (but not canonical). We finish the prove by noticing that (\ref{final ineq}) is nonnegative. 
\end{proof}

The proof of Corollary \ref{morse bott inequalities need proof} is purely algebraic. However, it does not intrinsically explain why we involve the cohomology groups of the critical set. 
As in \cite{bismutcomplicated}, \cite{bismutandlebeau}, and \cite{wenlumorsebottineq}, the intrinsic reason is, the kernel of the twisted $(d+d^*)^2\vert_{C^*(M,f)^G}$ corresponds to the sum of eigenspaces of $D_T^2$ associated to sufficiently small eigenvalues. 
More precisely, following the notations in Proposition \ref{spectral gap of the Witten Laplacian proposition}, we let $\xi(\varepsilon,T) = \left(C_0 e^{-C_1T} + \Gamma\varepsilon\right)^2$ and give Corollary \ref{application 1 of analytic result} a refinement which is similar to \cite[(2.68)]{wenlumorsebottineq}:  
\begin{corollary}\label{refinement of main result 2}
     For each $0\leqslant k\leqslant m$, when $T$ is sufficiently large, the map $P_T(\xi(\varepsilon,T))\circ J_T$ is an isomorphism between $\ker\left((d+d^*)^2\vert_{C^k(M,f)^G}\right)$ and 
    $F_T^k\left(M,f,\xi(\varepsilon,T)\right)$. 
\end{corollary}
\begin{proof}
    We replace the space $E_T$ in (\ref{definition of J_T will be used later}) by the image of $\ker\left((d+d^*)^2\vert_{C^k(M,f)^G}\right)$ under $J_T$. Then, we follow the same analysis as either our previous sections or \cite[VIII-X]{bismutandlebeau}. 
\end{proof}

Now, using Corollary \ref{refinement of main result 2} and the fact that $$\ker\left((d+d^*)^2\vert_{\Omega^j(\mathcal{O}_i,\mathcal{E}_i)^G}\right)\cong H^j(\mathcal{O}_i,\mathcal{E}_i)^G,$$ we prove Corollary \ref{morse bott inequalities need proof} again and reveal a more intrinsic relation between the cohomology groups in the inequalities (\ref{inequalities strong expression}). In fact, this is exactly the spirit of the analysis on the Morse-Bott inequalities associated to more general Morse-Bott functions in \cite{bismutcomplicated} and \cite{wenlumorsebottineq}. 

\begin{remark}\normalfont
    We can even refine Corollary \ref{refinement of main result 2} more to correspond each eigenvalue of the twisted $(d+d^*)^2\vert_{C^k(M,f)^G}$ with the associated subspace of $F_T^k(M,f,\alpha)$. This refined correspondence is the asymptotic behavior of eigenvalues of $D_T^2$ first studied by Helffer and Sj\"ostrand using semi-classical analysis tools in \cite{helfferseigenvalue}. 
\end{remark}

We end this paper by explaining how we notice $\alpha_0$, the spectral radius of $(d+d^*)^2\vert_{C^*(M,f)^G}$. 
\begin{example}\label{TX and MN}
\normalfont
    Let $M = G = SU(2)$ and use the left multiplication action. The basis \cite[Example 3.27]{hall2015lie}
    \begin{align*}
        \mathbf{v}_1 = \begin{bmatrix}
            0 & \sqrt{-1}\ \\
            \sqrt{-1} & 0\ 
        \end{bmatrix}, \mathbf{v}_2 = \begin{bmatrix}
            0 & -1\\
            1 & 0
        \end{bmatrix}, \mathbf{v}_3 = \begin{bmatrix}
            \sqrt{-1} & 0\\
             0 & -\sqrt{-1} 
        \end{bmatrix}
    \end{align*}
    of $\mathfrak{su}(2)$ generates a left invariant frame $X_1, X_2, X_3$ on $SU(2)$, and we let $X_1, X_2, X_3$ be orthonormal.
    When $f = 0$ on $SU(2)$, we find that each dual 1-form $X_j^*$ satisfies $$D_T^2(X_j^*) = 4 X_j^*,\ j = 1,2,3.$$
Therefore, compared with the Witten instanton complex in \cite{Bismut1994} and \cite{wittendeformationweipingzhang} where the number $\alpha$ in $F^k_T(M,f,\alpha)^G$ can be arbitrarily small, we need a nontrivial lower bound (which is $\alpha_0$) of $\alpha$ because of those eigenvalues of $D_T^2$ coming from the horizontal direction. 
\end{example}
The next example shows that $\alpha_0$ relies on both $M$ and $G$ instead of only on $G$. 
\begin{example}\label{irrelevant to group actions}
\normalfont
    We let $G = SO(3)$, $M = \mathbb{S}^2$, and $f = 0$ on $M$. Then, $G$ acts on $M$ transitively. Equipping $M$ with a $G$-invariant metric induced by $G$ as in Section \ref{sections of metrics and connections}, we let $\dvol_M$ be the unit volume form with respect to this metric and find: 
    \begin{enumerate}[label = (\arabic*)]
        \item  $\Omega^0(M)^G = \mathbb{R}$.
        \item  $\Omega^1(M)^G = \{0\}$. This is because the dual of a $G$-invariant $1$-form is either $0$ or a nonvanishing vector field. The latter is impossible on $\mathbb{S}^2$. 
        \item  $\Omega^2(M)^G = \mathbb{R}\cdot\dvol_M$. 
    \end{enumerate}
    Thus, in this case, $\alpha_0$ should be $0$. However, noticing the isomorphism $\mathfrak{so}(3)\cong \mathfrak{su}(2)$ between Lie algebras, if we only consider $SO(3)$ instead of considering $SO(3)$ and $\mathbb{S}^2$ together when determining $\alpha_0$, we will get $\alpha_0\geqslant 4$ by Example \ref{TX and MN}, which is not the minimal choice. 
\end{example}

\bibliographystyle{abbrv}
\bibliography{mybib}

\begin{thebibliography}{10}

\bibitem{micheleaudin}
M.~Audin.
\newblock {\em Torus Actions on Symplectic Manifolds}.
\newblock Birkh{\"a}user, 2012.

\bibitem{Austin1995}
D.~M. Austin and P.~J. Braam.
\newblock Morse-{B}ott theory and equivariant cohomology.
\newblock In {\em The Floer Memorial Volume}, pages 123--183. Birkh{\"a}user, 1995.

\bibitem{banyagamorsehomologypaper}
A.~Banyaga and D.~E. Hurtubise.
\newblock Morse-{B}ott homology.
\newblock {\em Trans. Amer. Math. Soc.}, 362(8):3997--4043, 2010.

\bibitem{bgv}
N.~Berline, E.~Getzler, and M.~Vergne.
\newblock {\em Heat Kernels and Dirac Operators}.
\newblock Springer, 2003.

\bibitem{bismutcomplicated}
J.-M. Bismut.
\newblock The {W}itten complex and the degenerate {M}orse inequalities.
\newblock {\em Journal of Differential Geometry}, 23(3):207--240, 1986.

\bibitem{bismutandlebeau}
J.-M. Bismut and G.~Lebeau.
\newblock Complex immersions and {Q}uillen metrics.
\newblock {\em Publications mathématiques de l'I.H.É.S.}, 74:1--298, 1991.

\bibitem{Bismut1994}
J.-M. Bismut and W.~Zhang.
\newblock Milnor and {R}ay-{S}inger metrics on the equivariant determinant of a flat vector bundle.
\newblock {\em Geometric \& Functional Analysis}, 4(2):136--212, 1994.

\bibitem{bott2013differential}
R.~Bott and L.~W. Tu.
\newblock {\em Differential Forms in Algebraic Topology}.
\newblock Springer, 2013.

\bibitem{felix2008algebraicmodels}
Y.~F{\'e}lix, J.~Oprea, and D.~Tanr{\'e}.
\newblock {\em Algebraic Models in Geometry}.
\newblock Oxford University Press, 2008.

\bibitem{greub1981linear}
W.~Greub.
\newblock {\em Linear Algebra}.
\newblock Springer, 1981.

\bibitem{hall2015lie}
B.~Hall.
\newblock {\em Lie Groups, Lie Algebras, and Representations: An Elementary Introduction}.
\newblock Springer, 2015.

\bibitem{allenhatcher2002}
A.~Hatcher.
\newblock {\em Algebraic Topology}.
\newblock Cambridge University Press, 2002.

\bibitem{Helffer01011985}
B.~Helffer and J.~Sjöstrand.
\newblock Puits multiples en mécanique semi-classique {IV}: {E}tude du complexe de {W}itten.
\newblock {\em Commun. P. D. E.}, 10(3):245--340, 1985.

\bibitem{helfferseigenvalue}
B.~Helffer and J.~Sjöstrand.
\newblock {Puits multiples en mécanique semi-classique VI. (Cas des puits sous-variétés)}.
\newblock {\em Annales de l'I.H.P. Physique théorique}, 46(4):353--372, 1987.

\bibitem{helffersbottineq}
B.~Helffer and J.~Sjöstrand.
\newblock A proof of the {B}ott inequalities.
\newblock In {\em Algebraic Analysis}, pages 171--183. Academic Press, 1988.

\bibitem{2011structureandgeometryofliegroups}
J.~Hilgert and K.-H. Neeb.
\newblock {\em Structure and Geometry of Lie Groups}.
\newblock Springer, 2011.

\bibitem{hutchingslecturenotes}
M.~Hutchings.
\newblock Lectures notes on {M}orse homology (with an eye towards {F}loer theory and pseudoholomorphic curves).
\newblock {\em \normalfont Lecture notes at UC Berkeley}, 2002.

\bibitem{handbookofglobalanalysis}
D.~Krupka and D.~Saunders.
\newblock {\em Handbook of Global Analysis}.
\newblock Elsevier Science, 2011.

\bibitem{Latschev2000GradientFO}
J.~Latschev.
\newblock Gradient flows of {M}orse-{B}ott functions.
\newblock {\em Mathematische Annalen}, 318:731--759, 2000.

\bibitem{lee2012introduction}
J.~M. Lee.
\newblock {\em Introduction to Smooth Manifolds}.
\newblock Springer, 2nd edition, 2012.

\bibitem{leeriemannian}
J.~M. Lee.
\newblock {\em Introduction to Riemannian Manifolds}.
\newblock Springer, 2nd edition, 2019.

\bibitem{wenlumorsebottineq}
W.~Lu.
\newblock Morse–{B}ott inequalities in the presence of a compact {L}ie group action and applications.
\newblock {\em Differential Geometry and its Applications}, 32:68--87, 2014.

\bibitem{meinrenken}
E.~Meinrenken.
\newblock Group actions on manifolds.
\newblock {\em \normalfont Lecture notes at University of Toronto}, 2003.

\bibitem{morita2001geometry}
S.~Morita, T.~Nagase, and K.~Nomizu.
\newblock {\em Geometry of Differential Forms}.
\newblock American Mathematical Society, 2001.

\bibitem{nicolaescu2020lectures}
L.~Nicolaescu.
\newblock {\em Lectures on the Geometry of Manifolds}.
\newblock World Scientific, 3rd edition, 2020.

\bibitem{roe1999elliptic}
J.~Roe.
\newblock {\em Elliptic Operators, Topology, and Asymptotic Methods}.
\newblock Taylor \& Francis, 2nd edition, 1999.

\bibitem{taylor2023partialvol2}
M.~E. Taylor.
\newblock {\em Partial Differential Equations II: Qualitative Studies of Linear Equations}.
\newblock Springer, 3rd edition, 2023.

\bibitem{warner2013foundations}
F.~Warner.
\newblock {\em Foundations of Differentiable Manifolds and Lie Groups}.
\newblock Springer, 2013.

\bibitem{wassermanequivariant}
A.~G. Wasserman.
\newblock Equivariant differential topology.
\newblock {\em Topology}, 8(2):127--150, 1969.

\bibitem{witten}
E.~Witten.
\newblock {Supersymmetry and Morse theory}.
\newblock {\em Journal of Differential Geometry}, 17(4):661--692, 1982.

\bibitem{wittendeformationweipingzhang}
W.~Zhang.
\newblock {\em Lectures on Chern-Weil Theory and Witten Deformations}.
\newblock World Scientific, 2001.

\bibitem{weipingzhangnewedition}
W.~Zhang and H.~Feng.
\newblock {\em Geometry and Analysis on Manifolds}.
\newblock Higher Education Press, {C}hinese edition, 2022.\ \begin{CJK}{UTF8}{gbsn}{\text{\small 张伟平, 冯惠涛: 流形上的几何与分析. 高等教育出版社}}\end{CJK}, 2022.

\end{thebibliography}
\end{document}